\newcommand*\mymatrixbox[5][]{\node [fit= (m-#2-#3) (m-#4-#5)] [draw=black,thick,inner sep=.5pt,#1] {};}
\newcommand{\vertsimeq}{\rotatebox{270}{$\,\simeq$}}
\newcommand{\simequal}[2]{\underset{\scriptstyle\overset{\mkern4mu\vertsimeq}{#2}}{#1}}
\DeclareMathOperator{\Norm}{Norm}
\DeclareMathOperator{\Jac}{Jac}
\DeclareMathOperator{\USp}{USp}
\DeclareMathOperator{\SU}{SU}
\DeclareMathOperator{\Sp}{Sp}
\DeclareMathOperator{\GSp}{GSp}
\DeclareMathOperator{\U}{U}
\DeclareMathOperator{\GL}{GL}
\DeclareMathOperator{\SL}{SL}
\DeclareMathOperator{\ST}{ST}
\DeclareMathOperator{\Gal}{Gal}
\DeclareMathOperator{\End}{End}
\DeclareMathOperator{\Aut}{Aut}
\DeclareMathOperator{\diag}{diag}
\DeclareMathOperator{\antidiag}{antidiag}
\DeclareMathOperator{\TL}{TL}
\DeclareMathOperator{\AST}{AST}
\DeclareMathOperator{\Hg}{Hg}
\DeclareMathOperator{\LL}{L}
\DeclareMathOperator{\tr}{tr}
\DeclareMathOperator{\MT}{MT}
\newcommand{\Z}{\mathbb{Z}}
\newcommand{\Q}{\mathbb{Q}}
\newcommand{\C}{\mathbb{C}}
\newtheorem{theorem}{Theorem}[section]
\newtheorem{example}[theorem]{Example}
\newtheorem{proposition}[theorem]{Proposition}
\newtheorem{lemma}[theorem]{Lemma}
\newtheorem{corollary}[theorem]{Corollary}
\newtheorem{conjecture}[theorem]{Conjecture}
\newtheorem{remark}[theorem]{Remark}
\theoremstyle{definition}
\theoremstyle{remark}
\newtheorem*{remark*}{Remark}
\author{Melissa Emory }
\address{Department of Mathematics, Oklahoma State University; 401 Mathematical Sciences, 
Oklahoma State University
Stillwater, OK 74078}
\email{melissa.emory@okstate.edu}
\author{Heidi Goodson}
\address{Department of Mathematics, Brooklyn College, City University of New York; 2900 Bedford Avenue, Brooklyn, NY 11210 USA}
\email{heidi.goodson@brooklyn.cuny.edu}
\title[Nondegeneracy and Sato-Tate Distributions]{Nondegeneracy and Sato-Tate Distributions of Two Families of Jacobian Varieties}
\subjclass[2010]{11M50, 11G10, 11G20, 14G10} 
\keywords{Hodge groups, Nondegeneracy, Sato-Tate groups, Sato-Tate distributions, Hyperelliptic curves}
\begin{document}
\maketitle 

\begin{abstract}
We consider the curves 
$ y^2=x^{2^m} -1$ and $y^2=x^{2^{d}+1}-x$ over the rationals.  These curves are related via their associated Jacobian varieties in that the Jacobians of the latter appear as factors of the Jacobians of the former. One of the principal aims of this paper is to fully describe their Sato-Tate groups and distributions by determining generators of the component groups. In order to do this, we first prove the nondegeneracy of the two families of Jacobian varieties via their Hodge groups. We then use results relating Sato-Tate groups and twisted Lefschetz groups of nondegenerate abelian varieties to determine the generators of the associated Sato-Tate groups. The results of this paper add new examples to the literature of families of nondegenerate Jacobian varieties and of noncyclic component groups of Sato-Tate groups. Furthermore, we compute moment statistics associated to the Sato-Tate groups which can be used to verify the equidistribution statement of the generalized Sato-Tate conjecture by comparing them to moment statistics obtained for the traces in the normalized $L$-polynomials of the curves. 
\end{abstract}

\section{Introduction}

In this paper we consider two families of hyperelliptic curves over the rationals
$$C_{2^m}: y^2=x^{2^m} -1, \hspace{.5 in} C'_{2^{d}+1}: y^2=x^{2^{d}+1}-x.$$ These curves are related via their associated Jacobian varieties: the Jacobians of $C'_{2^{d}+1}$ appear as factors of the Jacobians of $C_{2^m}$ for $m\geq 3$ and $1 \leq d \leq m-1$. The authors first studied curves of this form in \cite{EmoryGoodsonPeyrot} where we developed an algorithm to compute the identity components of the Sato-Tate groups of their Jacobians. One of the principal aims of this paper is to fully describe their Sato-Tate groups and distributions by determining generators of the component groups. The techniques that were used in \cite{EmoryGoodson2022, GoodsonCatalan} apply to curves whose Jacobians are nondegenerate, and so another principal aim of this paper is to prove the nondegeneracy of the two families of Jacobian varieties. Thus, this paper adds new examples to the literature of infinite families of nondegenerate Jacobian varieties and of interesting Sato-Tate groups.

A complex abelian variety $A$ is said to be \emph{stably nondegenerate} if the (complexified) Hodge ring of $A^n$ for $n\geq 1$ is generated by divisor classes. While the definition of nondegeneracy is a statement about the Hodge ring, we can see the effects of nondegeneracy in  groups constructed from the Hodge structure of the abelian variety: the Mumford-Tate group, the Hodge group, and the Sato-Tate group. The Mumford-Tate group and Hodge group are related to the Hodge ring via an action on certain cohomology groups. The algebraic Sato-Tate conjecture and Mumford-Tate conjecture imply a relationship between the identity component of the Sato-Tate group and the Mumford-Tate and Hodge groups. Thus, it is natural that the stable nondegeneracy of the Hodge ring has implications for these groups. In particular, we know that the Hodge group is maximal and so, by \cite[Theorem 2.16a]{Fite2012}, the algebraic Sato-Tate group is isomorphic to the twisted Lefschetz group (see Proposition \ref{prop:AST=TL}).  Furthermore, it follows from \cite[Theorem 6.1]{Banaszak2015} and work of Serre in \cite[Section 8.3.4]{SerreNXP} that if $A$ is a stably nondegenerate abelian variety defined over a number field $F$, then the component group $\ST(A)/\ST^0(A)$ is isomorphic to the Galois group $\Gal(K/F)$, where $K$ is the endomorphism field (i.e., the minimal extension over which all endomorphisms of the abelian variety are defined). It is for these reasons that it is particularly nice to work with stably nondegenerate abelian varieties in the context of Sato-Tate distributions, and why it is of interest to find new families of stably nondegenerate abelian varieties.

The study of Sato-Tate groups began with the original Sato-Tate conjecture for elliptic curves posed around 1960 by Mikio Sato and John Tate (independently). It is a statistical conjecture regarding the distribution of the normalized traces of Frobenius on an elliptic curve.  In 2012, the conjecture was generalized to higher dimensional abelian varieties by Serre \cite{SerreNXP}, and determining Sato-Tate groups of abelian varieties is the source of ongoing interest and work. Following the exposition of \cite{fitekedlayasuth2023} we discuss the generalized Sato-Tate conjecture.  
Let $A$ be a $g$-dimensional abelian variety over a number field $F$. Up to finitely many factors, corresponding to the primes of bad reduction, the $L$-function associated to the abelian variety $A$ is 
$$L(A,s)=\displaystyle\prod_{\mathfrak{p}}L_{\mathfrak{p}}(\Norm(\mathfrak{p})^{-s})^{-1},$$
where we take the product over prime ideals $\mathfrak{p}$ of the ring of integers of $F$ at which $A$ has good reduction\footnote{In general, when the dimension of $A>1$ determining the Euler factors at bad primes is difficult except in certain cases.   When $A$ is the Jacobian of a hyperelliptic curve, practical methods are known, and we refer the reader to \cite{DS1991} and \cite{Silverman}.}, the polynomial $L_{\mathfrak{p}}(T)$ is the characteristic polynomial associated to the reverse Frobenius acting on the $\ell$-adic Tate module when $\ell$ is coprime to $\mathfrak{p}$.  Normalizing $L_{\mathfrak{p}}$ so that the roots of the polynomial lie on the circle centered at the origin with norm 1, one can ask whether or not the resulting polynomials are equidistributed with respect to some measure.

 One can define a certain compact Lie subgroup of $\USp(g)$ associated to $A$ and a sequence of conjugacy classes therein whose characteristic polynomials are the normalized $L$-polynomials, which are equidistributed for the image of the Haar measure.  In  \cite{SerreNXP} Serre develops a set of axioms that this group (and the conjugacy classes therein) is expected to satisfy; we call this the {\it Sato-Tate group} of $A$ (see also \cite{SutherlandNotes}).  It is natural then to conjecture a corresponding equidistribution statement.  

We now specify to the case where $A$ is the Jacobian of a smooth projective curve $C$ and we now state the generalized Sato-Tate conjecture taken from \cite[Conjecture 3.6]{SutherlandNotes}.
\begin{conjecture}\label{conjec:generalST}
(Generalized Sato-Tate Conjecture)
    Let $\Jac(C_F)$ be the Jacobian of a smooth projective curve $C$ over a number field $F$, denote its Sato-Tate group $\ST(\Jac(C_F))$ and let $X_\mathfrak{p}$ be the sequence of conjugacy classes of normalized images of Frobenius elements in $\ST(\Jac(C_F))$ at primes $\mathfrak{p}$ of good reduction for $\Jac(C)_F$, ordered by norm (break ties arbitrarily). Then the sequence $X_\mathfrak{p}$ is equidistributed (with respect to the pushforward of the Haar measure of $\ST(\Jac(C_F))$ to its space of conjugacy classes).
\end{conjecture}

This equidistribution statement can be considered as an analogue of the Chebotaryov density theorem (which describes statistically the splitting of primes in a given Galois extension).  In this context the Sato-Tate group functions as the Galois group, and we then consider the Galois group associated to an arbitrary motive as in \cite{Serre1994}. Since the Sato-Tate group of a motive is a compact Lie group, we can retain much (but not all) of the same information by keeping track of the identity component and the group of connected components.

Although the generalized Sato-Tate conjecture is not fully proven, much is known for low dimensional cases. When $A$ is an elliptic curve with complex multiplication (CM), the distribution is precisely one of two depending on whether the CM field is contained in $F$ or not (see \cite{SutherlandNotes} for an exposition on this).  If the elliptic curve does not admit CM, the Sato-Tate conjecture predicts that exactly one distribution occurs and this has been proven over totally real fields (see \cite{Barnet2011, Clozel2008,Harris2010,Taylor2008}). Classification results for dimension 2 and 3 abelian varieties are given in \cite{Fite2012} and \cite{fitekedlayasuth2023}, respectively. The Sato-Tate conjecture was proven for higher dimensional CM abelian varieties in \cite{Joh17}, and there has been recent progress on computing Sato-Tate groups of nondegenerate abelian varieties (see, for example, \cite{Arora2016, EmoryGoodson2022,FGL2016, FiteLorenzoSutherland2018, GoodsonCatalan,GoodsonHoque2024,KedlayaSutherland2009, LarioSomoza2018}). The main goals of this paper are to prove that the Jacobians of the curves $C_{2^m}$ and $C'_{2^d+1}$ are nondegenerate and to determine their respective Sato-Tate groups. These curves are of particular interest since the Galois groups of their CM fields over $\Q$ are non-cyclic.

 Our first main results of the paper are regarding the nondegeneracy of the Jacobian varieties. For example, we have the following result (see Theorems \ref{theorem:nondegeneracyx^n-x} and \ref{thm:nondegeneracyx^2m-1}, and Corollary \ref{cor:nondegeneracyboth}). 
\begin{theorem}\label{thm:nondegneracyresult}
    Let $m\geq3$ and $d\geq 1$ be integers. Then the Jacobians of the curves $C_{2^m}: y^2=x^{2^m} -1$ and $C'_{2^{d}+1}: y^2=x^{2^{d}+1}-x$ and their twists are nondegenerate.
\end{theorem}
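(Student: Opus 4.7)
The plan is to reduce the theorem to nondegeneracy of $\Jac(C_{2^m})$ with $c=1$ and then to establish that case via an explicit decomposition into simple CM factors together with a direct Hodge group computation. Because nondegeneracy depends only on the geometric Hodge structure, the change of variables $(x,y)\mapsto(c^{1/2^m}x,\,c^{1/2}y)$, which identifies $y^2=x^{2^m}-c$ with $y^2=x^{2^m}-1$ over a finite extension of $\Q$, reduces the $c$-twisted statement to $c=1$. For the second family I would exhibit the morphism $(x,y)\mapsto(x^2,xy)$, sending $C_{2^{d+1}}:y^2=x^{2^{d+1}}-c$ onto $C'_{2^d+1}:Y^2=X^{2^d+1}-cX$ (one checks $Y^2=x^2(x^{2^{d+1}}-c)=X^{2^d+1}-cX$). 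This realizes $\Jac(C'_{2^d+1})$ as an isogeny factor of $\Jac(C_{2^{d+1}})$, so nondegeneracy of the latter yields that of the former.

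For the main case, the order-$2^m$ automorphism $\alpha:(x,y)\mapsto(\zeta_{2^m}x,y)$ acts on the standard basis $\omega_j=x^{j-1}\,dx/y$ of $H^0(C_{2^m},\Omega^1)$ ($1\le j\le 2^{m-1}-1$) by the distinct eigenvalues $\zeta_{2^m}^j$. Grouping eigenvalues by multiplicative order $2^k$ ($2\le k\le m$) yields an isogeny decomposition $\Jac(C_{2^m})\sim\prod_{k=2}^m A_k$, where $A_k$ has dimension $2^{k-2}$ and CM by $\Q(\zeta_{2^k})$, with CM type $\Phi_k=\{\zeta_{2^k}\mapsto\zeta_{2^k}^{j'}:\ j'\text{ odd},\ 1\le j'<2^{k-1}\}$. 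A brief check shows that $\Phi_k$ is not stable under the kernel of any surjection $(\Z/2^k)^\times\twoheadrightarrow(\Z/2^j)^\times$ with $j<k$, so $\Phi_k$ is primitive and each $A_k$ is simple.

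I would then prove nondegeneracy by showing $\Hg(\Jac(C_{2^m}))=\LL(\Jac(C_{2^m}))$, where $\LL$ denotes the Lefschetz group. For each factor $A_k$, both sides are tori described by characters on $\Gal(\Q(\zeta_{2^k})/\Q)\cong(\Z/2^k)^\times$, and the equality $\Hg(A_k)=\LL(A_k)$ reduces to showing that the $\Z$-sublattice of $\Z[(\Z/2^k)^\times]$ generated by $\Phi_k$ has the expected maximal rank. For the full Jacobian, I would verify there are no cross-factor character relations: any such relation would amount to a nontrivial linear dependence among the pairwise distinct eigenvalues $\zeta_{2^m}^j$ of $\alpha$, which cannot occur, so the Hodge and Lefschetz groups of the product decompose as products and coincide factor by factor.

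The principal obstacle I anticipate is the rank computation on $\Phi_k$ for $k\ge 3$, where $\Gal(\Q(\zeta_{2^k})/\Q)\cong\Z/2\times\Z/2^{k-2}$ is non-cyclic and the classical Shimura--Ribet arguments for prime cyclotomic fields do not directly apply. I expect to resolve this by induction on $k$, propagating the lattice bound along the tower of cyclotomic subfields while using primitivity of $\Phi_k$, or equivalently by applying Hazama's combinatorial criterion to the half-system of odd residues modulo $2^{k-1}$ that defines $\Phi_k$.
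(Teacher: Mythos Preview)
Your reduction to $c=1$ and the observation that $\Jac(C'_{2^d+1})$ is an isogeny factor of $\Jac(C_{2^{d+1}})$ are fine and match the paper. However, the core of your argument contains a genuine error.

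\textbf{The simplicity claim is false.} You assert that the CM type $\Phi_k=\{j'\text{ odd},\ 1\le j'<2^{k-1}\}$ on $\Q(\zeta_{2^k})$ is primitive because it is not stable under the kernel of any surjection $(\Z/2^k)^\times\twoheadrightarrow(\Z/2^j)^\times$. But this only rules out descent to \emph{cyclotomic} subfields. For $k\ge 3$ the group $(\Z/2^k)^\times$ contains the order-$2$ element $2^{k-1}-1$, and a direct check shows $\Phi_k$ \emph{is} stable under multiplication by $2^{k-1}-1$: for odd $j'$ with $1\le j'<2^{k-1}$ one has $(2^{k-1}-1)j'\equiv 2^{k-1}-j'\pmod{2^k}$, which is again odd and lies in $[1,2^{k-1})$. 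The fixed field of $\{1,2^{k-1}-1\}$ is $\Q(\zeta_{2^k}-\overline{\zeta_{2^k}})$, a non-cyclotomic CM subfield of index $2$, and $\Phi_k$ descends to it. Thus each $A_k$ with $k\ge 3$ is isogenous to the \emph{square} of a simple CM abelian variety, not simple itself. Your ``brief check'' of primitivity is therefore incorrect, and the subsequent lattice argument is set up on the wrong factors.

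\textbf{The cross-factor argument is also insufficient.} Character relations among the Hodge tori are \emph{multiplicative} relations among the eigenvalues $\zeta_{2^m}^j$, not linear ones; pairwise distinctness of the $\zeta_{2^m}^j$ says nothing here. (Indeed, $\zeta_{2^m}^{j_1}\cdot\zeta_{2^m}^{j_2}=\zeta_{2^m}^{j_3}\cdot\zeta_{2^m}^{j_4}$ happens constantly.) So even granting nondegeneracy of each factor, you have not shown the product is nondegenerate.

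The paper avoids both problems by passing to the genuine simple factors $X_d$ with CM field $F_d=\Q(\zeta_{2^{d+1}}-\overline{\zeta_{2^{d+1}}})$ and invoking the key input (from \cite{Carocca2011}) that each $F_d$ contains \emph{no proper CM subfield}. This immediately gives nondegeneracy of each $X_d$, and for the product the paper uses a lemma of Moonen--Zarhin: a strict inclusion $\Hg(X_{d_1}\times X)\subsetneq\Hg(X_{d_1})\times\Hg(X)$ would force a $\Q$-isogeny from $U_{F_{d_1}}$ into a product of $U_{F_{d_i}}$'s, hence an embedding of $F_{d_1}$ into some $F_{d_i}$, which is impossible since the $F_d$ have no proper CM subfields and distinct degrees. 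This bypasses any explicit rank or lattice computation.
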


The proof of this result relies on the decomposition of the Jacobian varieties $\Jac(C_{2^m})$ given in  Section \ref{sec:decomposition}   and on analyzing the Hodge groups of the factors $\Jac(C'_{2^d+1})$. This result is highly dependent on the degree of $x$. For example, in \cite{GoodsonDegeneracy}, the second author proved that $\Jac(y^2=x^n-1)$ is degenerate whenever $n$ is divisible by $pq$, where $p$ and $q$ are (not necessarily distinct) odd primes. Until recently, there was not a technique in the literature for computing the Sato-Tate groups of degenerate abelian varieties, however one has now been established specifically for the family $\Jac(y^2=x^n+1)$ in \cite{GalleseGoodsonLombardo}.

Once we have established  nondegeneracy, we are able to give a complete description of the Sato-Tate group for each family of curves by first determining their twisted Lefschetz groups. We use similar techniques to those used in \cite{EmoryGoodson2022} and \cite{GoodsonCatalan}. The Jacobians of the curves $C'_{2^{d}+1}$ appear as factors of the Jacobians of the curves $C_{2^m}$, and we are able to describe the Sato-Tate group of the latter in terms of that of the former.

\begin{restatable*}{theorem}{theoremST}
{\label{thm:STx^2m-1}}
Let $g$ be the genus of the curve $C_{2^m}: y^2=x^{2^m}-1$. Let $\gamma_J$ denote the $2g\times2g$ diagonal block matrix $\diag(J,J,\ldots, J)$, and let $\gamma$ be the $2g\times2g$ diagonal block matrix whose diagonal entries are the matrices $\gamma_d$ in Theorem \ref{thm:STx^n-x}, written in increasing order on $d$ for $1\leq d\leq m-1$.
Then the Sato-Tate group of the Jacobian of the curve $C_{2^m}: y^2=x^{2^m}-1$ satisfies
$$ \ST(\Jac(C_{2^m})) \simeq \left\langle \U(1)\times  \left(\U(1)^{2^{m-2}-1}\right)_2, \gamma, \gamma_J \right\rangle$$ 
for $m\geq 3$. 
\end{restatable*}

There are some challenges in generalizing this result to twists of the curves, even when we have explicit models for the twists and for the maps between the curves. For example, we can easily write down a map from the curve $y^2=x^{2^m}-1$ to the curve $y^2=x^{2^m}-c$, where $c\in \mathbb Q^*$, and obtain some information about the endomorphism field of the Jacobian of this twist. However, it is difficult to determine the exact field in general and it may be smaller than expected due to some subtleties with the field we twist by. For specific curves, one can use Magma \cite{Magma} to gain more information about the endomorphisms but this would not solve the problem in general. One could instead approach this problem by computing the twisting Sato-Tate groups of the curves, as was done in \cite{Arora2016,FiteLorenzoSutherland2018, FiteSuthlerland2014}, but this approach is more feasible for fixed genus than for the general case.

We demonstrate the result in Theorem \ref{thm:STx^2m-1}  with the following examples.

\begin{example}\label{example:x16}
We consider the genus $g=7$ curve $C_{16}:y^2=x^{16}-1$. In this case $m=4$ and the isogeny in Equation \eqref{eq:JacobianDecomposition} in Section \ref{sec:decomposition} becomes
$$\Jac(C_{16})\sim \Jac(C'_3)  \times \Jac(C'_5) \times \Jac(C'_9),$$ 
where $C'_n$ denotes the curve $y^2=x^n-x$. The result in Theorem \ref{thm:nondegeneracyx^2m-1} proves that the associated Hodge group decomposes as 
$$\Hg(\Jac(C_{16}))=\Hg(\Jac(C'_3))  \times \Hg(\Jac(C'_5)) \times \Hg(\Jac(C'_9))$$ and Proposition \ref{prop:ST0x^2m-1} tells us that the identity component of the Sato-Tate group is 
\begin{align*}
    \ST^0(\Jac(C_{16})) &\simeq \simequal{\ST^0(\Jac(C'_3))}{\U(1)}\times \simequal{\ST^0(\Jac(C'_5))}{\U(1)_2} \times \simequal{\ST^0(\Jac(C'_9))}{\U(1)_2 \times \U(1)_2}\\
    & \simeq \U(1)\times \U(1)_2 \times \U(1)_2 \times \U(1)_2 \\
    &\simeq \U(1) \times (\U(1)_2)^3
    \end{align*}
  where $(\U(1)_2)^3$ is defined in Equation \eqref{notation}.  
The endomorphism field of $\Jac(C_{16})$ is $\Q(\zeta_{16})$; since the Jacobian is nondegenerate, this implies that the component group of the Sato-Tate group is isomorphic to $\Gal(\Q(\zeta_{16})/\Q)$.  We let  $\gamma_J=\diag(J,J,J,J,J,J,J)$ for $J:=\begin{pmatrix}0&1\\-1&0\end{pmatrix}$ and $ \gamma=\diag(\gamma_1, \gamma_2,\gamma_3)$, where
    $$\gamma_1=I, \hspace{.25in} \gamma_2=\begin{pmatrix}0&-J\\J&0 \end{pmatrix}, \hspace{.25in} 
 \gamma_3=\begin{pmatrix}0&0&I&0\\J&0&0&0\\0&0&0&-J\\0&I&0&0 \end{pmatrix}$$
    are the matrices defined in Theorem \ref{thm:STx^n-x} for the curves $C'_3,C'_5$, and $C'_9$ respectively.
    Hence, by Theorem \ref{thm:STx^2m-1}, $$\ST(\Jac(C_{16})) \simeq \left\langle\U(1)\times (\U(1)_2)^3,\, \gamma, \gamma_J \right\rangle$$ where $\gamma$ is 
    \[
\gamma= {\footnotesize\left(\begin{tikzpicture}[baseline=0cm]
    \matrix [mymatrix] (m)  
           {I\\
           {}&0&\text{-}J\\{}&J&0\\
           {}&{}&{}&0&0&I&0\\{}&{}&{}&J&0&0&0\\{}&{}&{}&0&0&0&\text{-}J\\{}&{}&{}&0&I&0&0\\};
\mymatrixbox{1}{1}{1}{1}
\mymatrixbox{2}{2}{3}{3}
\mymatrixbox{4}{4}{7}{7}
     \end{tikzpicture}\right)}.
\]

\end{example}

A nice feature of this family of curves is that as $m$ grows, the decomposition of   $\Jac(C_{2^m})$ simply gains additional factors of the form $\Jac(C'_{2^d+1})$. This leads to recursive definitions of the generators of the Sato-Tate group. To see this, we provide the next example.

\begin{example}\label{example:x32}

We consider the genus 15 curve $C_{32}:y^2=x^{32}-1$. In this case $m=5$ and the isogeny in Equation \eqref{eq:JacobianDecomposition} becomes$$\Jac(C_{32})\sim \Jac(C'_3)  \times \Jac(C'_5) \times \Jac(C'_9) \times \Jac(C'_{17}).$$
Proposition \ref{prop:ST0x^2m-1} tells us that the identity component of the Sato-Tate group is 
\begin{align*}
    \ST^0(\Jac(C_{32})) &\sim \simequal{\ST^0(\Jac(C'_3))}{\U(1)}\times \simequal{\ST^0(\Jac(C'_5))}{\U(1)_2} \times \simequal{\ST^0(\Jac(C'_9))}{(\U(1) \times \U(1))_2}\times \simequal{\ST^0(\Jac(C'_{17}))}{(\U(1) \times \U(1) \times \U(1) \times \U(1))_2}\\
    & \simeq \U(1)\times (\U(1)_2)^7.
    \end{align*}
We let $\gamma_J=\diag(J,J,\ldots,J)$ for  $J:=\begin{pmatrix}0&1\\-1&0\end{pmatrix}$ and $\gamma=\diag(\gamma_1, \gamma_2,\gamma_3, \gamma_4)$ whose entries are the matrices defined in Theorem \ref{thm:STx^n-x} for the curves $C'_3,\,C'_5,\,C'_9$, and $C'_{17}$ respectively.
Hence, by Theorem \ref{thm:STx^2m-1}, $$\ST(\Jac(C_{32})) \simeq  \left\langle \U(1)\times (\U(1)_2)^7,\, \gamma,\gamma_J \right\rangle,$$
where $\gamma$ is 
\[
\gamma= {\footnotesize\left(\begin{tikzpicture}[baseline=0cm]
    \matrix [mymatrix] (m)  
           {I\\
           {}&0&\text{-}J\\{}&J&0\\
           {}&{}&{}&0&0&I&0\\{}&{}&{}&J&0&0&0\\{}&{}&{}&0&0&0&\text{-}J\\{}&{}&{}&0&I&0&0\\
           {}&{}&{}&{}&{}&{}&{}&0&0&I&0&0&0&0&0\\
           {}&{}&{}&{}&{}&{}&{}&0&0&0&0&0&0&0&I\\
           {}&{}&{}&{}&{}&{}&{}&0&0&0&J&0&0&0&0\\
           {}&{}&{}&{}&{}&{}&{}&0&I&0&0&0&0&0&0\\
           {}&{}&{}&{}&{}&{}&{}&0&0&0&0&0&0&I&0\\
           {}&{}&{}&{}&{}&{}&{}&0&0&0&0&\text{-}J&0&0&0\\
           {}&{}&{}&{}&{}&{}&{}&I&0&0&0&0&0&0&0\\
           {}&{}&{}&{}&{}&{}&{}&0&0&0&0&0&I&0&0\\};
\mymatrixbox{1}{1}{1}{1}
\mymatrixbox{2}{2}{3}{3}
\mymatrixbox{4}{4}{7}{7}
\mymatrixbox{8}{8}{15}{15}

     \end{tikzpicture}\right)}.
\]

\end{example}

\subsection*{Organization of the Paper}

In Section \ref{sec:background} we review and establish necessary facts about Sato-Tate groups and nondegeneracy. In Section \ref{sec:nondegeneracy}, we prove the nondegeneracy of the Jacobians of $C_{2^m}$ and $C'_{2^d+1}$. We achieve this for $\Jac(C_{2^m})$ in part by using the decomposition of the Jacobian given in  Section \ref{sec:decomposition} and by analyzing the Hodge groups of the factors $\Jac(C'_{2^d+1})$. This enables us to use the techniques of \cite{EmoryGoodson2022, GoodsonCatalan} to determine generators of the Sato-Tate groups of the Jacobians of the curves $C_{2^m}$ and $C'_{2^d+1}$ in Section \ref{sec:STgroups}.  We give explicit examples of some of these generators and the Sato-Tate groups in Example \ref{example:x16} and Example \ref{example:x32} above. In Section \ref{sec:moments}, we compute moment statistics associated to the Sato-Tate groups of the Jacobians of $C_{2^m}$ and $C'_{2^d+1}$. These moment statistics can be used to verify the equidistribution statement of the generalized Sato-Tate conjecture by comparing them to the moment statistics obtained for the traces $a_i$ in the normalized $L$-polynomial. Note that the numerical moment statistics are an approximation since one can only ever compute them up to some prime. It is of interest to those dealing with equidistribution statements to compare how close these two computations are, and we do this for some examples of our curves in Section \ref{sec:moments}.

\subsection*{Notation and conventions} \label{notation} We briefly summarize the basic notation and terminology used in this paper.  Let $C$ be a smooth projective curve defined over $\mathbb Q$. We write $\End(\Jac(C)_F)$ for the ring of endomorphisms of the Jacobian of  $C$ that are  defined over the field $F$. Let $K:=K_C$ denote the minimal extension of $\mathbb Q$ over which all the endomorphisms of the abelian variety $\Jac(C)$ are defined, i.e. the minimal extension $L/\Q$ for which $\End(\Jac(C)_L)\simeq \End(\Jac(C)_{\overline{\mathbb Q}})$; the field $K$ is called the \textbf{endomorphism field} of $\Jac(C)$. When dealing with primitive roots of unity, which we denote by $\zeta_d$ for the $d$-th roots of unity, we mean $\zeta_d=e^{2\pi i/d}.$

We denote the Sato-Tate group of the Jacobian of $C$  by $\ST(\Jac(C)):=\ST(\Jac(C)_{\mathbb Q})$ with identity component denoted $\ST^0(\Jac(C))$ and component group  $\ST(\Jac(C))/\ST^0(\Jac(C))$.  For any rational number $x$ whose denominator is coprime to an integer $r$, $\langle x \rangle_r$ denotes the unique representative of $x$ modulo $r$ between 0 and $r-1$. 

Define the two matrices
$$I:=\begin{pmatrix}1&0\\0&1\end{pmatrix}\; \text{ and }\; J:=\begin{pmatrix}0&1\\-1&0\end{pmatrix}.$$ The symplectic form considered throughout the paper is given by $\diag(J,\dots,J).$ We embed $\U(1)$ in $\SU(2)$ via $u\mapsto U= \diag(u, \overline u)$. Lastly, for any positive integer $n$, we define the following subgroups of the unitary symplectic group $\USp(2n)$. 
\begin{equation*}\label{eqn:U1subn}
    \U(1)_n:=\left\langle \diag(\underbrace{u, \overline u, \ldots, u,\overline u}_{n-\text{times}}): u\in \mathbb C^\times, |u|=1\right\rangle,
\end{equation*}

\begin{equation*}\label{eqn:U1n}
    \U(1)^n:=\left\langle \diag( u_1,\overline{u_1},\ldots, u_n,\overline{u_n}):u_i\in \mathbb C^\times, |u_i|=1\right\rangle,
\end{equation*}
and

\begin{equation}\label{eqn:U1nk}
    (\U(1)_2)^r:=\underbrace{\U(1)_2 \times \U(1)_2 \times \cdots \times \U(1)_2}_{r-\text{times}}, 
\end{equation}
where $2r=n.$

Let $F$ be a CM field, and let $T_F$ denote the Weil restriction of scalars $R_{F/\mathbb Q}(\mathbb G_m)$. Then $T_F$ is an $[F:\mathbb Q]$-dimensional algebraic torus, and we let $U_F$ be the algebraic subtorus defined by the condition
\begin{equation}\label{eqn:UFtorus}
    U_F(\mathbb Q):=\{a\in T_F(\mathbb Q)\,|\, x\overline x=1\}.
\end{equation}

\subsection*{Acknowledgements} The authors would like to thank the anonymous reviewer for their helpful comments on earlier drafts of this paper. The first named author was supported by NSF grant DMS-2002085 and an AMS-Simons Travel Award. The second named author was supported by NSF grant DMS-2201085 and by a PSC-CUNY Award, jointly funded by The Professional Staff Congress and The City University of New York. This project began while the second author was a visiting research scientist at the Max-Planck-Institut f\"ur Mathematik. She is grateful for the supportive research environment that they provided.


\section{Background and Preliminaries}\label{sec:background}

\subsection{An $\ell$-adic Construction of the Sato-Tate Group}

We follow the exposition of \cite{EmoryGoodson2022,GoodsonDegeneracy} and \cite[Section 3.2]{SutherlandNotes}. See also \cite[Chapter 8]{SerreNXP}.

Let $A/F$ be a $g$-dimensional abelian variety defined over a number field $F$ with a fixed embedding $F\hookrightarrow \mathbb C$. For any prime $\ell$, we define the Tate module $T_{\ell}:=\varprojlim_{n} A[\ell^n]$; this is a free $\mathbb{Z}_{\ell}$-module of rank $2g$. We define the rational Tate module $V_{\ell}:=T_{\ell}\otimes_{\mathbb{Z}} \mathbb{Q}$, which is a $\mathbb{Q}_{\ell}$-vector space of dimension $2g.$ The Galois action on the Tate module is given by an $\ell$-adic representation 
\begin{align}\label{eqn:artinrepresntation}
    \rho_{A,\ell}:\Gal(\overline{F}/F) \rightarrow \Aut(V_{\ell}) \cong \GL_{2g,\mathbb{Q}_{\ell}}(\mathbb{Q}_{\ell}).
\end{align}
The $\ell$-adic monodromy group of $A$, denoted $G_{A,\ell}$, is the Zariski closure of the image of this map in $\GL_{2g,\mathbb{Q}_{\ell}}$, and we define $G^{1}_{A,\ell}:=G_{A,\ell}\cap \Sp_{2g,\mathbb{Q}_{\ell}}$. 

\begin{conjecture}[Algebraic Sato-Tate Conjecture]\cite[Conjecture 2.1]{Banaszak2015}\label{conjec:AST}  There is an algebraic subgroup $\AST(A)$ of $\GSp_{2g}$ over $\mathbb Q$, called the {algebraic Sato-Tate group of $A$}, such that the connected component of the identity $\AST^0(A)$ is reductive and, for each prime $\ell$, $G^{1}_{A,\ell}=\AST(A)\otimes_{\mathbb Q} \mathbb Q_{\ell}$.
\end{conjecture}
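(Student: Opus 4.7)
The plan is to follow the strategy introduced by Banaszak and Kedlaya: construct an explicit candidate algebraic $\mathbb Q$-subgroup of $\GSp_{2g}$ from the endomorphism data of $A$, and then verify that its base change to $\mathbb Q_\ell$ recovers $G^1_{A,\ell}$ for every prime $\ell$.

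First, I would define the Lefschetz group $\LL(A)$ as the connected component of the identity in the centralizer, inside $\Sp_{2g,\mathbb Q}$, of the action of $\End(A_{\overline{\mathbb Q}})\otimes\mathbb Q$ on $H^1(A,\mathbb Q)$, where the symplectic form comes from a chosen polarization. This is reductive by construction and is the candidate for $\AST^0(A)$. Next, to produce the full group, I would form the twisted Lefschetz group
\[
\TL(A)=\bigcup_{\tau\in\Gal(K/\mathbb Q)}\LL(A)(\tau),
\]
where $K$ is the endomorphism field and $\LL(A)(\tau)\subset\Sp_{2g,\overline{\mathbb Q}}$ is the coset of elements $g$ satisfying $g\alpha g^{-1}=\tau(\alpha)$ for all $\alpha\in\End(A_{\overline{\mathbb Q}})\otimes\mathbb Q$. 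A descent argument then shows that this union is an algebraic $\mathbb Q$-subgroup of $\GSp_{2g}$ with identity component $\LL(A)$ and component group $\Gal(K/\mathbb Q)$.

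The essential remaining step is to establish $\TL(A)\otimes_{\mathbb Q}\mathbb Q_\ell=G^1_{A,\ell}$ for every prime $\ell$. On identity components this is equivalent to the Mumford-Tate conjecture, combined with the classical identification of $\MT(A)^{\mathrm{der}}$ with $\LL(A)^{\mathrm{der}}$; on component groups it follows from Serre's result (\cite[Section 8.3.4]{SerreNXP}) matching $G_{A,\ell}/G^{0}_{A,\ell}$ with $\Gal(K/\mathbb Q)$. Reductivity of $\AST^0(A)=\LL(A)$ is then immediate from its description as a centralizer of a semisimple algebra action on a symplectic space.

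The main obstacle is the Mumford-Tate conjecture itself, which remains open in general. The strategy therefore yields an unconditional proof only in settings where that conjecture is accessible: CM abelian varieties (Pohlmann), abelian varieties of dimension at most three, and nondegenerate abelian varieties such as the Jacobians studied in this paper, where nondegeneracy forces $\Hg(A)$ to agree with an explicit subgroup controlled by the endomorphism data, making both the Mumford-Tate conjecture and the algebraic Sato-Tate conjecture tractable.
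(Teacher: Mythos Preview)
The statement you are addressing is a \emph{conjecture}, quoted verbatim from \cite[Conjecture~2.1]{Banaszak2015}; the paper does not prove it and offers no argument for it beyond recording the cases in which it is known (Proposition~\ref{prop:AST=TL} and the surrounding discussion). There is therefore no ``paper's own proof'' to compare your proposal against.

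That said, your outline contains a genuine gap even as a conditional strategy. You take $\TL(A)$ as the candidate for $\AST(A)$ and reduce the identity-component statement to the Mumford--Tate conjecture together with what you call ``the classical identification of $\MT(A)^{\mathrm{der}}$ with $\LL(A)^{\mathrm{der}}$.'' That identification is not classical and is not true in general: one always has $\Hg(A)\subseteq \LL(A)$, but the inclusion can be strict (Mumford's fourfolds, and more relevantly here the degenerate Jacobians discussed in Section~\ref{sec:NondegenerateAV}). In such cases $\TL(A)\otimes_{\mathbb Q}\mathbb Q_\ell$ is strictly larger than $G^1_{A,\ell}$ even if Mumford--Tate holds, so $\TL(A)$ is the wrong candidate. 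The conjecture only asserts the existence of \emph{some} $\mathbb Q$-group $\AST(A)$ with the stated properties; it does not claim this group is $\TL(A)$. Proposition~\ref{prop:AST=TL} is precisely the statement that $\AST(A)=\TL(A)$ \emph{under the additional hypothesis of nondegeneracy}, which is exactly the condition forcing $\Hg(A)=\LL(A)$. Your final paragraph gestures at this, but the earlier paragraphs present the $\TL$ construction as if it were the general mechanism, which it is not.
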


When this conjecture holds, we define the {Sato-Tate group} of $A$, denoted   $\ST(A)$, to be a maximal compact Lie subgroup of $\AST(A)(\mathbb{C})$ contained in $\USp(2g)(\mathbb{C})$. It is conjectured that $\ST(A)$ is, up to conjugacy in $\USp(2g)(\mathbb{C})$, independent of the choice of the prime $\ell$ and of the embedding of $\mathbb{Q}_\ell$ in $\mathbb{C}$. While the Sato-Tate group is a compact Lie group, it may not be connected \cite{FiteSutherland2016}. We denote  the connected component of the  identity  (also called the identity component) of $\ST(A)$ by  $\ST^0(A)$.

There are many cases where the algebraic Sato-Tate conjecture is known to be true.  Banaszak and Kedlaya prove that the conjecture holds for all abelian varieties of dimension at most 3 \cite[Theorem 6.11]{Banaszak2015} and for many examples of simple abelian varieties  \cite[Theorem 6.9]{Banaszak2015}. There are examples of infinite families of higher dimensional Jacobian varieties for which the algebraic Sato-Tate conjecture is known to be true in \cite{EmoryGoodson2022, FiteSutherland2016, GoodsonCatalan}. Furthermore, Cantoral-Farf\'an and Commelin \cite{CantoralCommelin2022} proved that the algebraic Sato-Tate conjecture holds whenever the Mumford-Tate conjecture holds for an abelian variety.  Note that the abelian varieties that we consider in this paper are CM and the Mumford-Tate conjecture is known for CM abelian varieties and so, as a consequence, the algebraic Sato-Tate conjecture is known to hold for $\Jac(C_{2^m})$ and $\Jac(C'_{2^d+1})$.

\subsection{Hodge and Mumford Tate Groups}\label{sec:backgroundHodgeMT}

In this section, we follow the exposition in \cite{GoodsonDegeneracy} and Chapters 1 and 17 of \cite{BirkenhakeLange2004}. 

Let $A$ be a nonsingular projective variety over $\mathbb C$. We denote the first homology group of $A$ by $V(A):=H_1(A,\Q)$ and its dual (the first cohomology group) by $V^*(A):=H^1(A,\Q)$. The complex vector space $V(A)_\C$ has a weight $-1$ Hodge structure, i.e., a decomposition $V(A)_\C= V(A)^{-1,0}\oplus V(A)^{0,-1}$ where  $\overline{V(A)^{-1,0}}=V(A)^{0,-1}$. This corresponds to the following weight 1 Hodge structure of its dual
$$V^*_\C =H(A)^{1,0}\oplus H(A)^{0,1},$$
where $V(A)^{-1,0}=H^{1,0}(A)^*$ and $V(A)^{0,-1}=H^{0,1}(A)^*$. The notation in the decomposition of $H^1(A,\Q)$ is defined by $H^{a,b}(A)=H^b(\Omega^a_A),$ where $\Omega_A^a$ is the sheaf of holomorphic $a$-forms on $A$. We can also define $H^{a,b}(A)$ by
\begin{align*}
    H^{a,b}(A)\simeq \bigwedge^a H^{1,0}(A) \otimes \bigwedge^b H^{0,1}(A).
\end{align*}
Hodge structures of weight $n$ are defined by $H^n(A,\C):=\bigoplus_{a+b=n} H^{a,b}(A).$

The Hodge structure  determines a representation $\mu_{\infty,A}: \mathbb G_{m}(\C) \rightarrow \GL(V_\C)$ acting as multiplication by $z$ on $V(A)^{-1,0}$ and trivially on $V(A)^{0,-1}$. 
With this setup, we define the {Mumford-Tate group} of $A$, denoted $\MT(A)$, to be the smallest $\Q$-algebraic subgroup of $\GL_{V}$  such that $\mu_{\infty,A}(\mathbb G_{m}(\C))\subseteq \MT(A)(\C)$. We define the Hodge group of $A$, denoted $\Hg(A)$, to be the connected component of the identity of $\MT(A)\cap \SL_{V}$. 

The Hodge group can also be formed by restricting the representation $\mu_{\infty,A}$ to the circle group $\mathbb S^1:=\{z\in\C\mid |z|=1\}$. With this setup, the Hodge group is the smallest $\Q$-algebraic subgroup of $\GL_{V}$ such that  $\mu_{\infty,A}(\mathbb S^1)\subseteq \Hg(A)_\C$. The image of this restriction of $\mu_{\infty,A}$ lies in $\SL(V_\C)$, and so the Hodge group is a $\Q$-algebraic subgroup of $\SL_{V}$. In fact, one can show that the image of the representation $\mu_{\infty,A}$ is contained in the symplectic group $\GSp({V_\C})$. Hence, $\MT(A)$ and $\Hg(A)$ are $\Q$-algebraic subgroups of $\GSp_{V}$ and $\Sp_{V}$, respectively.

There are some useful identities for Hodge groups of products of abelian varieties. For $n\geq 1$, we can identify $\Hg(A^n)$ with $\Hg(A)$ and the action is performed diagonally on $V(A^n)=(V(A))^n$. More generally, for $n_1,n_2,\ldots, n_t\geq 1$, $\Hg(A_1^{n_1}\times A_2^{n_2}\times \cdots \times A_t^{n_t})   $ is isomorphic to $\Hg(A_1\times A_2\times \cdots \times A_t)$. Even more generally, if $A$ and $B$ are abelian varieties then $\Hg(A\times B)\subseteq \Hg(A)\times \Hg(B)$ (see \cite{GoodsonDegeneracy} for examples where this containment is strict).

\subsection{Nondegenerate Abelian Varieties}\label{sec:NondegenerateAV}

We denote the (complexified) Hodge ring of $A$ by
$$\mathscr B^*(A):=\displaystyle\bigoplus_{d=0}^{\dim(A)} \mathscr B^d(A), $$
where $\mathscr B^d(A)=(H^{2d}(A,\mathbb Q)\cap H^{d,d}(A))\otimes \mathbb C$ is the $\mathbb C$-span of Hodge cycles of codimension $d$ on $A$.  
The subring of $\mathscr B^*(A)$ generated by the divisor classes, i.e.  generated by $\mathscr B^1(A)$ , is
$$\mathscr D^*(A):=\displaystyle\bigoplus_{d=0}^{\dim(A)} \mathscr D^d(A),$$ 
where $\mathscr D^d(A)$ is the $\mathbb C$-span of classes of intersection of $d$ divisors. Note that we always have the containment $\mathscr D^*(A) \subseteq \mathscr B^*(A)$, and we say that an abelian variety $A$ is \emph{nondegenerate} if the containment is an equality, i.e. its Hodge ring is generated by divisor classes.

Results of Hazama \cite{Hazama85} show that $A$ is stably nondegenerate if and only if the rank of the Hodge group is maximal, which implies a similar statement for the Mumford-Tate group  ($A$ is stably nondegenerate if $\mathscr D^*(A^n) = \mathscr B^*(A^n)$ for all $n>0$). Hazama also proves an interesting result regarding products of abelian varieties: if $A$ and $B$ are both stably nondegenerate, then the only scenario in which $A\times B$ could be degenerate is if at least one of the simple factors is of type IV in the Albert's classification.

The relationship between divisor classes and Hodge cycles is relevant to the Hodge Conjecture. Let $\mathscr C^d(A)$ be the subspace of $\mathscr B^d(A)$ generated by the classes of algebraic cycles on $A$ of codimension $d$. Then 
$$\mathscr D^d(A) \subseteq \mathscr C^d(A) \subseteq \mathscr B^d(A)$$
and the Hodge Conjecture for $A$ asserts that every Hodge cycle is algebraic: $\mathscr C^d(A) = \mathscr B^d(A)$ for all $d$ (see \cite{Aoki2002, Shioda82}). One way to prove the Hodge Conjecture in codimension $d$ is to prove the equality $\mathscr D^d(A) =\mathscr B^d(A)$. However this equality does not always hold, even when the Hodge conjecture is known to be true. 

An example of this phenomenon is given by Shioda \cite{Shioda82} (and worked out in greater detail in \cite{EmoryGoodson2022, GoodsonDegeneracy,LombardoMT2021}). The Jacobian of the curve $y^2=x^9-1$ satisfies the Hodge Conjecture and is a 4-dimensional abelian variety that is isogenous to the product of two nondegenerate abelian varieties: a CM elliptic curve $E$ and a 3-dimensional absolutely simple CM abelian variety $A$. However, Shioda shows that the Hodge ring in dimension 2 contains exceptional cycles (those not generated by divisor classes). Furthermore, Lombardo shows that the Mumford-Tate group of the Jacobian is degenerate and the degeneracy of the Sato-Tate group is examined in \cite{EmoryGoodson2022,GoodsonDegeneracy}. See \cite{GoodsonDegeneracy} for more examples of this.

Nondegeneracy also plays a role in the component group of the Sato-Tate group. In general, for an abelian variety $A/F$ we have a canonical surjection
$$\ST(A)/\ST^0(A) \rightarrow \Gal(K/F),$$
where $K$ is the endomorphism field of $A$. If $A/F$ is a stably nondegenerate abelian variety then this surjection is an isomorphism (see, for example, \cite[Remark 6.4]{Banaszak2015}). On the other hand, the surjection is not necessarily an isomorphism if $A$ is degenerate.  In general, we have an isomorphism $\ST(A)/\ST^0(A) \simeq \Gal(L/F)$, where $L$ is the minimal Galois extension of $F$ for which $\ST(A_L)$ is connected. The field $L$ is given by the kernel of the representation $\Gal(\overline{F}/F)\rightarrow G_{A,\ell} \rightarrow G_{A_\ell}/G^0_{A,\ell}$ which is induced by the $\ell$-adic representation $\rho_{A,\ell}$ in Equation \eqref{eqn:artinrepresntation} (see \cite[Theorem 3.12]{{SutherlandNotes}}). See \cite[Section 5.2]{GoodsonDegeneracy} for an example of a Jacobian variety  for which the field $L$ is  larger than its endomorphism field.

Furthermore, for stably nondegenerate abelian varieties, we can determine explicit generators of the component group of the Sato-Tate group through the twisted Lefschetz group. The twisted Lefschetz  group, denoted $\TL(A)$, is a closed algebraic subgroup of  $\Sp_{2g}$ defined by
\begin{align*}
\TL(A):=\bigcup_{\tau \in \Gal(\overline{F}/F)} \LL(A)(\tau),
\end{align*}
where $\LL(A)(\tau):=\{\gamma \in \Sp_{2g}\mid \gamma \alpha \gamma^{-1}=\tau(\alpha) \text{ for all }\alpha \in \End(A_{\overline{F}})_\mathbb{Q}\}$.

The following proposition holds by applying the proof techniques used in \cite[Lemma 3.5]{FGL2016}, which relies on results from \cite{Banaszak2015} and \cite{Fite2012}.
\begin{proposition}\label{prop:AST=TL}
The algebraic Sato-Tate Conjecture holds for stably nondegenerate abelian varieties $A$ with $\AST(A)=\TL(A)$.
\end{proposition}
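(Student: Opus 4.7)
The plan is to establish $\AST(A) = \TL(A)$ by showing both groups have the same identity component and the same component group, after first verifying that $\TL(A)$ is a reasonable candidate for $\AST(A)$. Note that $\TL(A)$ is an algebraic $\Q$-subgroup of $\Sp_{2g}$ by construction, since it is a finite union of $\Q$-varieties $L(A)(\tau)$ indexed by the finite quotient $\Gal(K/F)$ (as $L(A)(\tau)$ only depends on $\tau|_K$ where $K$ is the endomorphism field). So the substantive content is checking that $G^1_{A,\ell} = \TL(A)\otimes_\Q \Q_\ell$ for every prime $\ell$.

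First I would establish the containment $G^1_{A,\ell}\subseteq \TL(A)\otimes_\Q \Q_\ell$. This holds for \emph{any} abelian variety and is essentially formal: if $\sigma\in\Gal(\overline F/F)$ and $\alpha\in\End(A_{\overline F})_\Q$, then compatibility of the Galois action on endomorphisms with the $\ell$-adic Tate module gives $\rho_{A,\ell}(\sigma)\,\alpha\,\rho_{A,\ell}(\sigma)^{-1} = \sigma(\alpha)$ as endomorphisms of $V_\ell$. Taking Zariski closure, every element of $G^1_{A,\ell}$ lies in some $L(A)(\tau)_{\Q_\ell}$, giving the inclusion.

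Next I would prove equality on identity components using nondegeneracy. Recall that $\TL^0(A) = L(A)$, the Lefschetz group, defined as the connected subgroup of $\Sp_{2g}$ commuting with the full endomorphism algebra $\End(A_{\overline F})_\Q$. One always has
\begin{equation*}
\Hg(A) \subseteq L(A),
\end{equation*}
and the key Hodge-theoretic input of Hazama (cited in Section \ref{sec:NondegenerateAV}) is that $A$ is nondegenerate precisely when this containment is an equality, equivalently when the Hodge group attains maximal rank inside the Lefschetz group. Because $A$ is nondegenerate, the Mumford-Tate conjecture is known in the form needed here (it reduces to the equality of ranks, which holds by Hazama combined with Borovo\u\i{} and Deligne), yielding $(G^1_{A,\ell})^0 = \Hg(A)\otimes_\Q\Q_\ell$. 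Chaining these identifications gives
\begin{equation*}
(G^1_{A,\ell})^0 \;=\; \Hg(A)\otimes\Q_\ell \;=\; L(A)\otimes\Q_\ell \;=\; \TL^0(A)\otimes\Q_\ell.
\end{equation*}

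Finally, I would match the component groups. The $\ell$-adic representation factors Galois action on endomorphisms through $\Gal(K/F)$, and the induced map $G^1_{A,\ell}/(G^1_{A,\ell})^0 \to \Gal(K/F)$ is an isomorphism for nondegenerate $A$ (this is the statement recalled in Section \ref{sec:NondegenerateAV} that $L=K$). On the $\TL$ side, the decomposition $\TL(A) = \bigsqcup_{\tau\in\Gal(K/F)} L(A)(\tau)$ into connected translates of $L(A)$ shows $\TL(A)/\TL^0(A)\simeq \Gal(K/F)$, provided each coset $L(A)(\tau)$ is non-empty. Non-emptiness follows from the previous step: a Frobenius lift above a prime with residue restricting to $\tau$ on $K$ gives an element of $G^1_{A,\ell}\cap L(A)(\tau)_{\Q_\ell}$. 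Combining the equality of identity components with the matching of component groups yields $G^1_{A,\ell} = \TL(A)\otimes_\Q\Q_\ell$, hence $\AST(A) = \TL(A)$, which then defines $\ST(A)$ as a maximal compact subgroup in the usual way.

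The main obstacle is the middle step: the equality $\Hg(A) = L(A)$ for nondegenerate $A$ is not formal and requires invoking Hazama's structural results on Hodge groups (together with results on CM factors, since the Albert type IV case is where degeneracy can occur). Once that equality is in hand, the remaining ingredients are rearrangements of known $\ell$-adic and Galois-theoretic facts.
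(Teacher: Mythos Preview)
The paper does not supply a proof of this proposition; it is quoted from \cite{Banaszak2015} as a black box and then applied only to the CM Jacobians under study. So there is no ``paper's own proof'' to compare against, and your sketch should be judged on its own.

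Your outline is broadly the right shape (containment $G^1_{A,\ell}\subseteq \TL(A)_{\Q_\ell}$, match identity components, match component groups), but there is a genuine gap in the middle step. You assert that nondegeneracy yields $(G^1_{A,\ell})^0=\Hg(A)\otimes\Q_\ell$, attributing this to ``Hazama combined with Borovo\u\i{} and Deligne.'' That equality is precisely the Mumford--Tate conjecture, and it is \emph{not} a formal consequence of nondegeneracy. Hazama's theorem is purely Hodge-theoretic: it tells you $\Hg(A)=L(A)$ (or has maximal rank in $L(A)$) when $A$ is nondegenerate, but says nothing about the $\ell$-adic monodromy group. Borovo\u\i--Deligne (Hodge cycles are absolute Hodge) gives only the containment $(G^1_{A,\ell})^0\subseteq \Hg(A)\otimes\Q_\ell$; the reverse inclusion is open in general. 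Your phrase ``it reduces to the equality of ranks'' does not close the gap, because establishing that the rank of $(G^1_{A,\ell})^0$ equals the rank of $\Hg(A)$ is itself essentially the Mumford--Tate conjecture.

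In the applications in this paper the abelian varieties are of CM type, and for those the Mumford--Tate conjecture is known (Pohlmann, Shimura--Taniyama, Serre); this is also why Banaszak--Kedlaya's Theorem~6.6 goes through unconditionally in the CM case. If you want an argument valid for an arbitrary nondegenerate $A$, you must either add the Mumford--Tate conjecture as a hypothesis (compare the Cantoral-Farf\'an--Commelin result cited just above the proposition) or restrict to classes where it is already known.
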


We rely on this result in Section \ref{sec:STgroups} when we compute generators of the Sato-Tate groups of the two families of Jacobian varieties we focus on in this paper.


\section{Nondegeneracy}\label{sec:nondegeneracy}
In this section we prove that the Jacobian varieties we are studying are nondegenerate. We do this by first determining the decomposition of the Jacobians in Section \ref{sec:decomposition} and then by analyzing the Hodge groups of the factors in the decomposition in Section \ref{sec:nondegeneracyproof}. 
 The results in this section allow us to use the twisted Lefschetz group to determine generators of the Sato-Tate group for each Jacobian in Section \ref{sec:STgroups}.

\subsection{Decompositions of the Jacobians}\label{sec:decomposition}
Here we specify known results for decompositions of Jacobians to the families we are studying in this paper. We show that the Jacobians of curves of the form $C_{2^m}: y^2=x^{2^m}-1$ decompose into products of Jacobians of curves of the form $C_{2^d+1}':y^2=x^{2^d+1}-x$, which gives the motivation for studying these two specific families of curves together.  

We first recall the following result, which we will use to describe the decomposition of the Jacobians of the curves $C_{2^m}$.
\begin{theorem}\cite[Theorem 4.3]{EmoryGoodsonPeyrot}\label{thm:JacobianDecomposition}
Let $c\in\mathbb Q^*$, and let $v_2\colon  \mathbb Q^* \rightarrow \mathbb Z$ denote the $2$-adic valuation map. Let $C\colon y^2= x^{2g+2}+c$ be a hyperelliptic curve of genus $g$ and write $k:= v_2(g+1)$. Then we have the following isogeny over $\Q$
$$\Jac(C) \sim \Jac(y^2= x^{{(g+1)}/{2^k}}+c)^2 \times \prod_{i=0}^{k-1} \Jac(y^2=x^{{(g+1)}/{2^{i}}+1}+cx).$$
\end{theorem}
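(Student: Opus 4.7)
The plan is to establish the decomposition inductively via a Kani-Rosen (Prym) argument. Write $n := 2g+2$, so that $g+1 = 2^k m$ with $m$ odd. First I will exhibit two commuting $\mathbb{Q}$-rational involutions on $C$: the hyperelliptic involution $\tau:(x,y)\mapsto(x,-y)$ and $\iota:(x,y)\mapsto(-x,y)$ (which is defined over $\mathbb{Q}$ because $n$ is even, so $-1$ is an $n$-th root of unity). These commute and generate a Klein four-group $V_4\subset\Aut_{\mathbb{Q}}(C)$ with quotient $C/V_4\cong\mathbb{P}^1$.

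Applying the standard Prym decomposition (Kani-Rosen applied to the idempotent relation in $\mathbb{Q}[V_4]$) and using that $\Jac(C/\tau)$ and $\Jac(C/V_4)$ are trivial (both quotients are of genus $0$), one obtains
$$\Jac(C)\sim \Jac(C/\iota)\times\Jac(C/\iota\tau).$$
Computing invariant rings, $C/\iota$ is the curve $y^2=u^{n/2}+c$ (via $u=x^2$), while $C/\iota\tau$ is the curve $v^2=u^{n/2+1}+cu$ (via $u=x^2$, $v=xy$). The second quotient is precisely the $i=0$ factor $y^2=x^{(g+1)+1}+cx$ in the stated decomposition, while the first quotient is again a curve of the same shape as $C$ but with $k$ replaced by $k-1$.

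Iterating this procedure $k$ times peels off, in order, the factors $\Jac(y^2=x^{(g+1)/2^i+1}+cx)$ for $i=0,1,\dots,k-1$, and reduces the problem to the base case $y^2=u^{2m}+c$ with $m$ odd. The remaining task is to prove that this residual Jacobian is isogenous to $\Jac(y^2=x^m+c)^2$. One further application of the $V_4$ Prym decomposition gives $\Jac(y^2=u^{2m}+c)\sim \Jac(y^2=x^m+c)\times\Jac(y^2=x^{m+1}+cx)$, and the $\mathbb{Q}$-birational substitution $x=1/u,\ y=v/u^{(m+1)/2}$ (well defined because $m$ is odd, so $(m+1)/2\in\mathbb{Z}$) rewrites $y^2=x^{m+1}+cx$ as $v^2=cu^m+1$, a twist of $y^2=u^m+c$.

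The main obstacle is upgrading this last identification to a $\mathbb{Q}$-isogeny $\Jac(y^2=x^{m+1}+cx)\sim \Jac(y^2=x^m+c)$. I would approach this by producing an explicit $\mathbb{Q}$-rational correspondence between the two quotients, coming from the larger geometric automorphism group of the base-case curve, which over $\overline{\mathbb{Q}}$ contains the involution $(x,y)\mapsto(c^{1/m}/x,\, c^{1/2}\,y/x^m)$ swapping the two quotients, together with a Galois descent argument; alternatively, one can compare the Hecke Grossencharacters of the CM field attached to the two Jacobians and verify that they agree up to a character trivial on the relevant adèle group. Splicing the iterated Prym decomposition together with this base-case identification yields the claimed isogeny over $\mathbb{Q}$.
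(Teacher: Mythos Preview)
The paper does not prove this theorem; it is quoted from \cite{EmoryGoodsonPeyrot} and used as a black box, so there is no in-paper argument to compare against. Your inductive Kani--Rosen scheme is the correct mechanism for the product factors: the commuting $\mathbb{Q}$-involutions $\iota,\iota\tau$ on $y^2=x^{2n}+c$ have quotients $y^2=x^{n}+c$ and $y^2=x^{n+1}+cx$, and $k$ iterations over $\mathbb{Q}$ peel off exactly the factors $\Jac\!\bigl(y^2=x^{(g+1)/2^i+1}+cx\bigr)$ for $i=0,\dots,k-1$, leaving $\Jac\!\bigl(y^2=x^{2m}+c\bigr)$ with $m=(g+1)/2^k$ odd.

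The real problem is the base case, and it is worse than you indicate: the $\mathbb{Q}$-isogeny you are trying to produce does not exist in general. Your birational map gives $y^2=x^{m+1}+cx\cong_{\mathbb{Q}}\{v^2=cu^m+1\}$; for $m=3$ the latter is $\mathbb{Q}$-isomorphic to $E_{c^2}\colon Y^2=X^3+c^2$ (set $u=X/c,\ v=Y/c$). But $E_c\colon y^2=x^3+c$ and $E_{c^2}$ are \emph{not} $\mathbb{Q}$-isogenous for generic $c$: with $c=2$ one computes $a_7(E_2)=-1$ and $a_7(E_4)=5$, whence $\Jac(y^2=x^6+2)\sim_{\mathbb{Q}}E_2\times E_4\not\sim_{\mathbb{Q}}E_2^2$. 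So neither a Galois-descent nor a Grossencharacter argument can manufacture the claimed $\mathbb{Q}$-isogeny, because the statement $\Jac(y^2=x^{2m}+c)\sim_{\mathbb{Q}}\Jac(y^2=x^m+c)^2$ is false for generic $c$ once $m>1$. This does not harm the present paper, which only invokes the theorem with $(g+1)/2^k=1$, making the squared factor $\Jac(y^2=x+c)^2$ trivial; but for the general assertion either ``over $\mathbb{Q}$'' must be relaxed to $\overline{\mathbb{Q}}$ (where the base-case isogeny does hold, and already over $\mathbb{Q}$ when $c=1$ since then $v^2=u^m+1$ is literally the target curve), or the first factor should be the unsquared $\Jac\!\bigl(y^2=x^{2(g+1)/2^k}+c\bigr)$ that your iteration actually produces.
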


The genus of the curve $y^2=x^{2^m}-1$ is $g=2^{m-1}-1$, and so applying  Theorem \ref{thm:JacobianDecomposition} to this curve and adjusting the indices yields 
\begin{align}\label{eq:JacobianDecomposition}
    \Jac(y^2=x^{2^m}-1) \sim \displaystyle\prod_{d=1}^{m-1}\Jac(y^2=x^{2^{d}+1}-x).
\end{align}
The Jacobians of dimension greater than 1 (i.e., when $d>1$) that appear in this decomposition can be factored further.

\begin{proposition}\label{prop:FullJacDecomp}Let $m \geq 3$ and $1 \leq d \leq m-1$. 
    The Jacobian of $C_{2^m}:y^2=x^{2^m}-1$ factors over $\overline\Q$ as the product 
    $$\Jac(y^2=x^{2^m}-1)\sim (y^2=x^3-x)\times\prod_{d=2}^{m-1} X_d^2,$$
    where each $X_d$ is a simple Jacobian variety of dimension $2^{d-2}$ with CM by $\Q(\zeta_{2^{d+1}}-\overline\zeta_{2^{d+1}})$ and $\zeta_{2^{d+1}}$ is a primitive $2^{d+1}$th root of unity.
\end{proposition}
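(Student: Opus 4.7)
The plan is to combine the isogeny decomposition already recorded in Equation \eqref{eq:JacobianDecomposition} with the structural result of Carocca--Hidalgo--Rodr\'iguez \cite{Carocca2011} on the factors $\Jac(C'_{2^d+1})$, and then to translate their description of the CM field into the cleaner form stated in the proposition.

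First, I would specialize Equation \eqref{eq:JacobianDecomposition} to $c=1$ to obtain
\[
\Jac(C_{2^m}) \;\sim\; \prod_{d=1}^{m-1} \Jac\bigl(y^2=x^{2^d+1}-x\bigr).
\]
The $d=1$ term is the elliptic curve $y^2=x^3-x$, which accounts for the first factor in the statement. For each $d\geq 2$, I would invoke the cited result of \cite{Carocca2011}: the Jacobian of $C'_{2^d+1}: y^2=x^{2^d+1}-x$ is isogenous over $\overline{\mathbb Q}$ to the square of a simple abelian variety $X_d$ which has CM by $\Q(\zeta+\zeta^{2^d-1})$, with $\zeta$ a primitive $2^{d+1}$-th root of unity. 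Substituting these square factorizations into the product above yields the desired isogeny.

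Next I would compute the dimensions. The curve $y^2=x^{2^d+1}-x$ has squarefree polynomial of degree $2^d+1$, so its genus is $\lfloor 2^d/2\rfloor = 2^{d-1}$; hence $\dim \Jac(C'_{2^d+1})=2^{d-1}$, which forces $\dim X_d = 2^{d-2}$ once we know $\Jac(C'_{2^d+1})\sim X_d^2$. Finally, I would rewrite the CM field in the cleaner form stated. Writing $\zeta=\zeta_{2^{d+1}}$, I note that $\zeta^{2^d}=-1$ and $\overline\zeta=\zeta^{-1}$, so
\[
\zeta^{2^d-1} \;=\; \zeta^{2^d}\cdot\zeta^{-1} \;=\; -\overline\zeta,
\]
which gives $\zeta+\zeta^{2^d-1} = \zeta_{2^{d+1}}-\overline\zeta_{2^{d+1}}$ and identifies the CM field as $\Q(\zeta_{2^{d+1}}-\overline\zeta_{2^{d+1}})$.

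The main obstacle is really just bookkeeping: verifying that the normalization of the curve used in \cite{Carocca2011} agrees with the curve $C'_{2^d+1}$ here, and that their isogeny is indeed valid over $\overline{\mathbb Q}$ (rather than only over a larger field), so that stringing together the isogenies in the decomposition is legitimate. Beyond this verification, the computation is essentially an arithmetic identity for roots of unity together with the genus formula for hyperelliptic curves.
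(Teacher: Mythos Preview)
Your proposal is correct and matches the paper's own argument essentially line for line: the paper simply combines Equation~\eqref{eq:JacobianDecomposition} with the cited result of \cite{Carocca2011} and the identity $\zeta^{2^d-1}=-\overline\zeta$, exactly as you do. If anything, you supply more detail than the paper (the genus/dimension count and the explicit verification of the root-of-unity identity), but the approach is identical.
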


\begin{proof}
    From Equation \eqref{eq:JacobianDecomposition} with $c=1$ we have the following isogeny over $\mathbb{Q}$  $$\Jac(y^2=x^{2^m}-1)\sim (y^2=x^3-x)\times\displaystyle\prod_{d=2}^{m-1}\Jac(y^2=x^{2^{d}+1}-x).$$  
    Let $\zeta$ be a primitive $2^{d+1}$th root of unity. It is shown in Proposition 3.10, and Corollary 4.6 of \cite{Carocca2011} that the $\Jac(y^2=x^{2^{d}+1}-x)$ is isogenous to the square of a simple abelian variety with CM by $\Q(\zeta+\zeta^{2^d-1})$ . Combining this with the identity $\zeta^{2^d-1}=-\overline\zeta$ yields the result. 
   \end{proof}
 
\begin{remark}
    Just as in \cite[Remark 5.4]{FiteSutherland2016}, we note that the Sato-Tate group of $\Jac(C_{2^m})$ is not isomorphic to the direct sum of the Sato-Tate groups of its factors. Indeed, it cannot be the case since as we show in Section \ref{sec:nondegeneracyproof} these are all nondegenerate abelian varieties and so, by the discussion in Section \ref{sec:NondegenerateAV}, the component groups of the Sato-Tate groups are isomorphic to the Galois groups of the endomorphism field(s) over $\Q$. However, the Galois group associated to $\Jac(C_{2^m})$ is not isomorphic to the direct product of the Galois groups of the factors. This further demonstrates the need to give an
explicit description for $\ST(C_{2^m})$ and $\ST(C'_{2^d+1})$ in terms of generators. 
\end{remark}

\subsection{Nondegeneracy Results}\label{sec:nondegeneracyproof}

We now prove that the Jacobians of the curves $ y^2=x^{2^m} -1$ and $y^2=x^{2^{d}+1}-x$ and their twists are nondegenerate. 
\begin{theorem}\label{theorem:nondegeneracyx^n-x}
Let $d\geq1$. The Jacobian of $C'_{2^d+1}:y^2=x^{2^d+1}-x$ is stably nondegenerate.    
\end{theorem}
\begin{proof}
When $d=1$, the curve $C'_{2^d+1}$ is an elliptic curve and it is well-known that elliptic curves are nondegenerate. Thus, for the remainder of the proof, we assume that $d>1$. 

It is shown in the proof of Proposition \ref{prop:FullJacDecomp} that the Jacobian of $C'_{2^d+1}$ is isogenous to the square of a simple abelian variety, which we denote by $X_d$. Corollary 4.6 of \cite{Carocca2011} shows that the CM-field of $X_d$ is $F_d:=\Q(\zeta+\zeta^k)$, where $\zeta:=\zeta_{2^{d+1}}$ is a primitive $2^{d+1}$th root of unity and $k=2^d-1$. It is further shown in Proposition 4.7 of \cite{Carocca2011} that this field does not contain a proper CM-subfield. Thus, by Theorem 1.1 of \cite{Kida2020}, every CM-type of the field $F_d$ is nondegenerate. In particular, the CM-type of $X_d$ is nondegenerate. Results of Hazama \cite{Hazama83} prove that this is equivalent to the abelian variety $X_d$ being stably nondegenerate (see also, \cite[Proposition 2.2]{Aoki2004}). Thus, since $\Jac(C'_{2^d+1})$ is isogenous to $X_d^2$, we have that $\Jac(C'_{2^d+1})$ is stably nondegenerate.
\end{proof}

\begin{corollary}
   Let $X_d$ be the abelian variety defined in Proposition \ref{prop:FullJacDecomp}. Then $\Hg(X_d)= U_{F_d}$.
\end{corollary}

\begin{proof}\label{cor:HodgeUFd}
Each $X_d$ is a CM abelian variety, and so its Hodge group is a commutative algebraic torus. We can, therefore, write $\Hg(X_d)\subseteq U_{F_d}$, where $U_{F_d}$ is as defined in Equation \eqref{eqn:UFtorus}. Furthermore, since $X_d$ is nondegenerate, the dimension of the Hodge group is maximal i.e., $\dim(\Hg(X_d))=\dim(X_d)$ since $X_d$ is simple. Lastly, $\dim(X_d)=1/2 [F_d:\Q]$, which equals $\dim(U_{F_d})$ (see, for example, \cite[Section 7]{MoonenZarhin95}). This proves the desired result.
\end{proof}

\begin{theorem}\label{thm:nondegeneracyx^2m-1}
    Let $m\geq 3$. The Jacobian of $y^2=x^{2^m}-1$ is stably nondegenerate.
\end{theorem}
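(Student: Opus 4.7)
The plan is to establish stable nondegeneracy of $\Jac(C_{2^m})$ via Hazama's rank criterion (see Section \ref{sec:NondegenerateAV}), which implies the claimed nondegeneracy. The argument proceeds in two layers: I will use the isogeny decomposition in Equation \eqref{eq:JacobianDecomposition} to reduce the problem to the factors $\Jac(C'_{2^d+1})$, and then use the finer $\overline\Q$-decomposition of Proposition \ref{prop:FullJacDecomp} to compute each factor's Hodge group via CM theory.

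First, Equation \eqref{eq:JacobianDecomposition} gives the $\Q$-isogeny
$$\Jac(C_{2^m}) \sim \prod_{d=1}^{m-1}\Jac(C'_{2^d+1}),$$
and the general containment $\Hg(A\times B)\subseteq \Hg(A)\times \Hg(B)$ yields
$$\Hg(\Jac(C_{2^m})) \subseteq \prod_{d=1}^{m-1} \Hg(\Jac(C'_{2^d+1})).$$
For $d=1$ the factor is a CM elliptic curve with Hodge group $\U(1)$. For $d\geq 2$, Proposition \ref{prop:FullJacDecomp} shows that $\Jac(C'_{2^d+1})\sim X_d^2$ over $\overline\Q$, where $X_d$ is a simple CM abelian variety of dimension $2^{d-2}$ with CM field $K_d=\Q(\zeta_{2^{d+1}}-\overline\zeta_{2^{d+1}})$; hence $\Hg(\Jac(C'_{2^d+1}))\simeq \Hg(X_d)$. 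I will use the explicit cyclotomic CM type $\Phi_d$ of $X_d$ (see Example \ref{example:x16} and \cite{Carocca2011}) to verify combinatorially that $\Phi_d$ is nondegenerate, that is, the associated Hodge torus in the cocharacter lattice attached to $K_d$ attains the maximal possible rank $\dim X_d + 1$. This mirrors the type of calculation carried out in closely related cyclotomic settings in \cite{EmoryGoodson2022, GoodsonCatalan}.

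The hardest step, and the main obstacle, will be to promote the inclusion above to an equality. Because every $X_d$ is of Albert Type IV, Hazama's product theorem does not force nondegeneracy of $\prod X_d^2$ automatically, and one must rule out exceptional Hodge cycles arising from interactions between the $X_d$, whose CM fields $K_d$ all sit inside the single cyclotomic field $\Q(\zeta_{2^m})$. My plan here is to realize the ambient Hodge group as a subtorus of the cyclotomic torus attached to $\Q(\zeta_{2^m})$, write down its cocharacter lattice using the combined CM types $\bigsqcup_d \Phi_d$, and show that the only linear relations among these cocharacters are those already internal to each $\Hg(X_d)$. Concretely, this reduces to checking that the restrictions of the embeddings $K_d\hookrightarrow \Q(\zeta_{2^m})$ to the chosen CM types are linearly independent modulo the relations forced by the totally real subfield. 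Once this independence is verified, the ranks of the $\Hg(X_d)$ add, matching the maximal rank predicted by Hazama, and stable nondegeneracy follows.
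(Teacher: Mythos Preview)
Your proposal takes a genuinely different route from the paper's proof. The paper's argument is structural rather than computational: it uses the fact (from \cite[Proposition 4.7]{Carocca2011}) that each CM field $F_d=\Q(\zeta_{2^{d+1}}-\overline\zeta_{2^{d+1}})$ contains no proper CM subfield, so for $d_1<d_2$ there is no embedding $F_{d_1}\hookrightarrow F_{d_2}$. The heavy lifting is then done by \cite[Lemma 3.6]{MoonenZarhin99}: if $\Hg(X_{d_1}\times X)\subsetneq\Hg(X_{d_1})\times\Hg(X)$ for a subproduct $X$, the center of $\Hg(X)$ would contain a torus $\Q$-isogenous to $\Hg(X_{d_1})=U_{F_{d_1}}$, forcing a homomorphism $U_{F_{d_1}}\to U_{F_{d_2}}\times\cdots\times U_{F_{d_r}}$ with finite kernel and hence an embedding of $F_{d_1}$ into some $F_{d_i}$, a contradiction. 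This peels off one factor at a time with no explicit rank or cocharacter calculation.

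Your approach---realizing the Hodge group inside the cyclotomic torus of $\Q(\zeta_{2^m})$ and checking linear independence of the combined CM-type cocharacters---is exactly the method the paper carries out \emph{numerically} for $m=4$ in the subsection on Mumford--Tate groups (the matrix in Equation \eqref{eqn:MTmatrix16}, verified in Sage), and the paper explicitly notes that proving this in general would give an alternate proof. So your route is legitimate, but the paper chose the Moonen--Zarhin argument precisely because the general linear-independence step is not obviously tractable by hand. As written, your proposal stops at ``once this independence is verified,'' and that verification is the entire difficulty: you would need a closed-form combinatorial argument on the $2$-adic structure of the embeddings that you have not supplied. The paper's route sidesteps this completely by trading the lattice computation for the single field-theoretic observation that the $F_d$ admit no proper CM subfields.
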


\begin{proof}
We will prove that the Hodge group of $\Jac(y^2=x^{2^m}-1)$ is maximal (i.e., the rank of $\Hg(\Jac(y^2=x^{2^m}-1))$ equals the reduced dimension of $\Jac(y^2=x^{2^m}-1)$, which is equivalent to proving that its Hodge ring is generated by divisor classes (see \cite[Definition 2.6 and Theorem 2.7]{Hazama85}). The decomposition in Proposition \ref{prop:FullJacDecomp} implies that $\Hg(\Jac(y^2=x^{2^m}-1))=\Hg(X_1\times X_2^2 \times \cdots \times X_{m-1}^2)$, where $X_1$ denotes the elliptic curve  $y^2=x^3-x$. Recall from Section \ref{sec:backgroundHodgeMT} that, for an abelian variety $A$, we have $\Hg(A^n)=\Hg(A)$ for $n\geq 1$. Hence, to prove the stable nondegeneracy of $\Jac(y^2=x^{2^m}-1)$ it suffices to prove that 
    $$\Hg(X_1\times X_2^2 \times \cdots \times X_{m-1}^2)= \Hg(X_1)\times\Hg(X_2)\times\cdots\times\Hg(X_{m-1}).$$
    
The elliptic curve $X_1$ has complex multiplication by $F_1:=\Q(\zeta_4)=\Q(i)$. For $d\geq 2$, the simple factor $X_d$ has complex multiplication by the field $F_d=\Q(\zeta+\zeta^k)$,  where $\zeta:=\zeta_{2^{d+1}}$ is a primitive $2^{d+1}$th root of unity and $k=2^d-1$. As noted in the proof of Theorem \ref{theorem:nondegeneracyx^n-x}, the field $F_d$, for $d\geq 1$, does not contain a proper CM-subfield. Hence, for distinct simple factors $X_{d_1}$ and $X_{d_2}$ with $d_1<d_2$, there is no embedding $F_{d_1}\hookrightarrow F_{d_2}$.

Now let $X=X_{d_2}\times \cdots \times X_{d_r}$ be a subproduct of $X_1\times \cdots \times X_{m-1}$. For any $d_1<d_2,d_3,\ldots,d_r$, the abelian variety $X_{d_1}$ is not a factor of $X$ and its dimension is less than or equal to the dimension of each factor of $X$. We will prove that 
        $$\Hg(X_{d_1}\times X)=\Hg(X_{d_1})\times \Hg(X).$$

Since the simple abelian varieties  $X_{d_2}, \ldots, X_{d_r}$ are nonisogenous, the endomorphism algebra of their product $X$ is the product $F_{d_2} \times \ldots \times F_{d_r}$ and $\Hg(X)$ is contained in $U_{F_{d_2}}\times\cdots\times U_{F_{d_r}}$.

As noted in Lemma 3.6 of \cite{MoonenZarhin99}, if $\Hg(X_{d_1}\times X)$ is strictly contained in $\Hg(X_{d_1})\times \Hg(X)$ then the center of $\Hg(X)$ contains an algebraic torus that is $\Q$-isogenous to $\Hg(X_{d_1})$. This would imply that there is a homomorphism 
    $$U_{F_{d_1}} \to U_{F_{d_2}}\times\cdots\times U_{F_{d_r}}$$
with finite kernel. However this is not possible since $F_{d_1}$ does not embed into any of the fields $F_{d_i}$. Thus, we must have 
$$\Hg(X_{d_1}\times X)=\Hg(X_{d_1})\times \Hg(X).$$

Given that the above holds for any subproduct $X$, we can conclude that 
    $$\Hg(X_1\times \cdots \times X_{m-1})=\Hg(X_1)\times\cdots\times\Hg(X_{m-1}).$$
    Since each $X_{d_i}$ is nondegenerate (see Theorem \ref{theorem:nondegeneracyx^n-x}), the Hodge group of each has maximal dimension. Thus, the Hodge group of $\Jac(y^2=x^{2^m}-1)$ is also maximal, and so by Theorem 2.7 of \cite{Hazama85} the Jacobian of $y^2=x^{2^m}-1$ is stably nondegenerate.
\end{proof}

\begin{corollary}\label{cor:nondegeneracyboth}
The Jacobian of any twist of  $C_{2^m}$ or $C_{2^d+1}$ is stably nondegenerate.
\end{corollary}
\begin{proof}
This follows from Theorems \ref{theorem:nondegeneracyx^n-x} and \ref{thm:nondegeneracyx^2m-1} since nondegeneracy is invariant under twisting.
\end{proof}

\begin{remark}
    In particular, the Jacobians of $C_{2^m}:y^2=x^{2^m}-c$ and $C_{2^d+1}:y^2=x^{2^d+1}-cx$ are stably  nondegenerate for any $c\in\Q^\times$. 
\end{remark}


\section{Sato-Tate Groups}\label{sec:STgroups}

The main results of this section are Theorems \ref{thm:STx^n-x} and \ref{thm:STx^2m-1} where we compute generators of the Sato-Tate groups of the Jacobians of the curves $C'_{2^d+1}$ and $C_{2^m}$. For both families of curves, we obtain the component group of the Sato-Tate group by computing the twisted Lefschetz groups (recall the results of Proposition \ref{prop:AST=TL}).

We begin by again considering the decomposition of the Jacobian of $y^2=x^{2^m}-1$, now with the goal of describing its complex uniformization.

\subsection{Explicit Maps to Lower Dimensional Jacobians}\label{sec:MapToLowerDim}

Theorem \ref{thm:JacobianDecomposition} and Equation \eqref{eq:JacobianDecomposition} of Section \ref{sec:nondegeneracy} give us a decomposition of the Jacobian of the curve $y^2=x^{2^m}-1$. In this subsection we give the maps to the lower dimensional Jacobian varieties since we will need these for our work in Section \ref{sec:Preliminariesx^2m-1}.

Let $C$ denote the curve $y^2=x^{2g+2}-1$. We can write the following nonconstant morphisms defined over $\Q$ to the lower genus curves $C':y^2=x^{g+2}-x$ and $\tilde{C}: y^2=x^{g+1}-1$:
\begin{align*}
    \phi&:C\to C', \;\;\;(x,y)\mapsto (x^2,xy),\\
    \tilde\phi&:C\to \tilde C,\;\;\;\, (x,y)\mapsto (x^2,y).
\end{align*}

The curves $y^2=x^{2g+2}-1$ in this paper are quite special: we are specifically working with curves $y^2=x^{2^m}-1$, which have odd genus $g=2^{m-1}-1$. This implies that the lower genus curves $C'$ are of the form $y^2=x^{2^{m-1}+1}-x$ and $\tilde{C}$ are of the form $y^2=x^{2^{m-1}}-1$. We can use the same process as above to write maps from $y^2=x^{2^{m-1}}-1$ to lower genus curves. In general, for $2\leq d\leq m-1$, there are maps from $C_{2^m}: y^2=x^{2^m}-1$ to $C'_{2^d+1}: y^2=x^{2^d+1}-x$ and $C_{2^d}: y^2=x^{2^d}-1$ given by
\begin{align}
    \phi_d&:C_{2^m}\to C'_{2^d+1}, \;\;\;(x,y)\mapsto (x^{2^{m-d}},x^{2^{m-d-1}}y),\label{eq:maptox^n-x}\\
    \tilde\phi_d&:C_{2^m}\to \tilde C_{2^d},\;\;\;\;\;\; (x,y)\mapsto (x^{2^{m-d}},y).\label{eq:maptox^2k-1}
\end{align}
There is an additional map $\phi_d$ when $d=1$. This maps $C_{2^m}$ to the elliptic curve $y^2=x^3-x$, which is isogenous to the curve $y^2=x^4-1$ obtained from the map $\tilde\phi_2$. For the factorization in Equation \eqref{eq:JacobianDecomposition}, we chose the model $y^2=x^3-x$ for the elliptic curve factor and so the maps we will work with are all of the form appearing in Equation \eqref{eq:maptox^n-x}. 

For $1\leq d\leq m-1$, the genus of the curve $C'_{2^d+1}: y^2=x^{2^d+1}-x$ is $g_d=2^{d-1}$. A basis for the space of regular differential 1-forms of $\Jac(C'_{2^d+1})$ is given by $\{x^j dx/y\}_{j=0}^{g_d-1}$. 
Pulling back these 1-forms via the map $\phi_d:C_{2^m}\to C'_{2^d+1}$ yields $\phi_d^*(x^j dx/y)=2^{m-d}x^{n_{d,j}} dx/y$, where $n_{d,j}=2^{m-d-1}(2j+1)-1$. Letting $\omega_{d,j}=x^{n_{d,j}} dx/y$, we see that the map $\phi_d$ corresponds to the inclusion
\begin{align}\label{eqn:UniformizationInclusion}
\C\left\langle\{\omega_{d,j}\}_{j=0}^{g_d-1}\right\rangle\hookrightarrow\C\left\langle\{x^{i} dx/y\}_{i=0}^{g-1}\right\rangle,    
\end{align}
where $\C\left\langle\{x^{i} dx/y\}_{i=0}^{g-1}\right\rangle=H^0((C_{2^m})_\C,\Omega^1_\C)$  can be identified with the complex uniformization of $\Jac(C_{2^m})$.

It is straightforward to verify that distinct values of $n_{d,j}$ lead to  distinct subspaces $\C\left\langle\{x^{n_{d,j}} dx/y\}\right\rangle$ and that the direct sum of these subspaces span $H^0((C_{2^m})_\C,\Omega^1_\C)$. Note that when $d=m-1$, the exponents $n_{d,j}$ are all even and the inclusion in Equation \eqref{eqn:UniformizationInclusion} yields all of the differentials in $H^0((C_{2^m})_\C,\Omega^1_\C)$ with even exponent for $x$.

\subsection{Sato-Tate groups for curves of the form $y^2=x^{2^d+1}-x$}

Let $C'_{2^d+1}$ denote the curve $y^2=x^{2^d+1}-x$, where $d\geq 2,$ and so the genus of $C'_{2^d+1}$ is a power of 2: $g_d=2^{d-1}$. 

\subsubsection{Preliminaries}\label{sec:Preliminariesx^n-x}
Throughout this subsection, let $\zeta_{2^{d+1}}$ be a primitive $2^{d+1}$th root of unity. It follows from work in \cite[Section 4.3]{Carocca2011} and \cite[Table 1]{Muller2022} that the group of automorphisms of $C'_{2^d+1}$ is generated by the automorphisms
\begin{align*}
    \alpha_d(x,y)&=(\zeta_{2^{d+1}}^2 x,\zeta_{2^{d+1}}y)&
    \beta_d(x,y)&=\left(-\frac{1}x,\frac{y}{x^{g_d+1}}\right).
\end{align*}
The endomorphism field of $\Jac(C'_{2^d+1})$ is $K=\Q(\zeta_{2^{d+1}})$  \cite[Proposition 4.2 and Theorem 2]{Carocca2011}.    

In Theorem  \ref{theorem:nondegeneracyx^n-x} and Corollary \ref{cor:nondegeneracyboth} we proved that the Jacobian variety $\Jac(C'_{2^d+1})$ is nondegenerate. This allows us to conclude that 
$\ST(\Jac(C'_{2^d+1}))/\ST^0(\Jac(C'_{2^d+1}))\simeq \Gal(K/\Q)$. The following lemma gives a generating set for this Galois group.
\begin{lemma}\label{lemma:generatorGalois}
    The Galois group $\Gal(\Q(\zeta_{2^{d+1}})/\Q)$ equals $\langle \sigma_{-1}, \sigma_5\rangle$, where $\sigma_a\colon
    \zeta_{2^{d+1}}\mapsto  \zeta_{2^{d+1}}^a$.
\end{lemma}

\begin{proof}
   The proof of this result relies on simple abstract algebra facts that can be found in \cite{DummitFoote}. The Galois group $\Gal(\Q(\zeta_{2^{d+1}})/\Q)$ is isomorphic to $(\Z/2^{d+1}\Z)^\times$ via $a\mapsto \sigma_a$. The group $(\Z/2^{d+1}\Z)^\times$ is generated by two elements, one of order 2 and one of order $2^{d-1}$. We can choose 5 as the element of order $2^{d-1}$. Since no power of 5 is congruent to $-1 \pmod{2^{d+1}}$ for $d\geq 1$, we can choose $-1$ as our generator of order 2. Thus, we can choose $\sigma_{-1}, \sigma_5$ as generators of $\Gal(\Q(\zeta_{2^{d+1}})/\Q)$.
\end{proof}

The final preliminary detail we need before computing generators of the Sato-Tate group is a description of certain endomorphisms of $\Jac(C'_{2^d+1})$. We compute pullbacks of the regular 1-forms $\omega_j=x^j dx/y$, where $j=0,1,\ldots, g_d-1$, with respect to the curve automorphisms $\alpha_d$ and $\beta_d$ and find that
\begin{align}
    \alpha_d^*(\omega_j)&=\zeta_{2^{d+1}}^{2j+1}\omega_j\label{eqn:pullbackalphad}\\
    \beta_d^*(\omega_j)&=(-1)^j\omega_{g_d-1-j}.\label{eqn:pullbackbetad}
\end{align}

We take the symplectic basis of $H_1(\Jac(C'_{2^d+1})_\C,\C)$ corresponding to the basis $\{\omega_j\}_{j=0}^{g_d-1}$ of regular 1-forms (this basis is symplectic with respect to the skew-symmetric matrix $\diag(J,J,\ldots, J)$) as in \cite[Lemma 3.6]{FGL2016}. With this setup, the endomorphisms corresponding to the automorphisms $\alpha_d$ and $\beta_d$ are
\begin{align}
\alpha_d&=\diag(Z_{d},Z_{d}^3,Z_{d}^5\ldots,Z_{d}^{2g_d-1})\label{eqn:alphad}\\
    \beta_d &=\antidiag(I, -I, I, \ldots, -I)\label{eqn:betad},
\end{align}
where $Z_{d}=\diag(\zeta_{2^{d+1}},\overline\zeta_{2^{d+1}})$.

\subsubsection{Sato-Tate group results}

We begin with a result for the identity component of the Sato-Tate group, which confirms Conjecture 6.13 of \cite{EmoryGoodsonPeyrot} for this family of curves.

\begin{proposition}\label{prop:ST0x^n-x}  Let $d \geq 2$ and $C'_{2^d+1}: y^2=x^{2^d+1}-x$. Then 

\[\ST^0(\Jac(C'_{{2^d+1}}))\simeq\left(\U(1)^{2^{d-2}}\right)_2 .
    \]
When $d=1$, $\Jac(C'_{{2^d+1}})$ is an elliptic curve with CM, and so $\ST^0(\Jac(C'_{{2^d+1}}))\simeq \U(1) $.
\end{proposition}

\begin{proof}
The result for $d=1$ is well-known. When $d\geq 2$, each of the Jacobians $\Jac(C'_{2^d+1})$ are isogenous over $\overline\Q$ to squares of simple abelian varieties with CM and are  of dimension $2^{d-1}$ (see Proposition \ref{prop:FullJacDecomp}). Since $\Jac(C'_{2^d+1})$  is stably nondegenerate (Corollary \ref{cor:nondegeneracyboth}), the result follows.      
\end{proof}

\begin{theorem}\label{thm:STx^n-x}
Let $g_d$ be the genus of the curve $C'_{{2^d+1}}:y^2=x^{2^d+1}-x$, where $d \geq 1$, and let $\zeta:=\zeta_{2^{d+1}}$ be a primitive $2^{d+1}$th root of unity. Up to conjugation in $\USp(2g_d)$, the Sato-Tate group of the Jacobian of the curve  satisfies
$$\ST(\Jac(C'_{{2^d+1}}))\simeq \left\langle \left(\U(1)^{2^{d-2}}\right)_2, \gamma_J, \gamma_d \right\rangle$$ 
where $\gamma_J=\diag(J,J,\ldots,J)$ and the $2\times2$ block entries of $\gamma_d$  are given by
$$\gamma_d[i,j] = \begin{cases}
I & \text{if } 2j-1=\langle 5(2i-1)\rangle_{2^{d+1}},\\
J & \text{if } j\leq\frac{g_d}{2} \text{ and } 2j-1=2^{d+1}-\langle 5(2i-1)\rangle_{2^{d+1}},\\
-J & \text{if } j>\ \frac{g_d}{2} \text{ and }  2j-1=2^{d+1}-\langle 5(2i-1)\rangle_{2^{d+1}},\\
0 & \text{otherwise,}
\end{cases}$$
for $1\leq i,j\leq g_d$.
\end{theorem}

\begin{proof}
For $d=1$, it is well-known that $\ST(\Jac(C'_{{2^d+1}}))\simeq N(\U(1))$. From the above construction we obtain the matrices $\gamma_J=J$ and $\gamma_1=I$, and we see that $N(\U(1))\simeq \langle \U(1), J\rangle$.

For $d\geq2$ we use the techniques implemented in \cite{EmoryGoodson2022, GoodsonCatalan} to compute the twisted Lefschetz group of the Jacobian. Applying Proposition \ref{prop:AST=TL} then yields the desired result.

Recall from Section \ref{sec:Preliminariesx^n-x} that the endomorphism field of $\Jac(C'_{2^d+1})$ is $K=\Q(\zeta_{2^{d+1}})$. Lemma \ref{lemma:generatorGalois} gives the two generators of the Galois group, and we now consider the action of each generator on the endomorphisms $\alpha_d$ and $\beta_d$. 

First note that the action on $\beta_d$ is trivial for both $\sigma_{-1}$ and $\sigma_5$. The block entries of $\alpha_d$ are odd powers of the $2\times2$ matrix $Z_{d}=\diag(\zeta,\overline\zeta)$. The Galois element $\sigma_{-1}$ acts on blocks of this form as follows:
$${}^{\sigma_{-1}}Z_{d}=\diag(\overline\zeta_{2^{d+1}},\zeta_{2^{d+1}})=\overline Z_{d}.$$
Thus, $\sigma_{-1}$ acts on $\alpha_d$ by conjugating each entry: ${}^{\sigma_{-1}}\alpha_d=\overline\alpha_d$. Since $JZ_{d}J^{-1}=\overline{Z}_{d}$ and $J^{-1}=-J$, we find that $\gamma_J\alpha_d{\gamma_J}^{-1}={}^{\sigma_{-1}}\alpha_d$ where $\gamma_J=\diag(J,J,\ldots, J).$ Furthermore, one can check that $\gamma_J\beta_d{\gamma_J}^{-1}=\beta_d={}^{\sigma_{-1}}\beta_d$. This confirms that $\gamma_J$ is in the twisted Lefschetz group.

We now consider the action of the Galois element $\sigma_5$, which can be described by $${}^{\sigma_5}Z_{d}=\diag(\zeta_{2^{d+1}}^{5},\overline{\zeta_{2^{d+1}}}{}^{5}).$$ The block entry appearing in the $i^{th}$ diagonal block is $Z_{d}^{2i-1}$. Note that both $2i-1$ and $\langle 5(2i-1)\rangle_{2^{d+1}}$ are odd, positive integers. Thus, the action of $\sigma_5$ can be described by
$${}^{\sigma_5}Z_{d}^{2i-1}=\begin{cases}
Z_{d}^{\langle 5(2i-1)\rangle_{2^{d+1}}} & \text{if } \langle 5(2i-1)\rangle_{2^{d+1}}\leq 2g_d,\\
\overline{Z}_d^{2^{d+1}-\langle 5(2i-1)\rangle_{2^{d+1}}} & \text{if }\langle 5(2i-1)\rangle_{2^{d+1}}> 2g_d.
\end{cases}$$

Based on this, and using the techniques developed in \cite{EmoryGoodson2022} and \cite{GoodsonCatalan}, we find that the $\gamma_d$ defined in the statement of the theorem satisfies $\gamma_d\alpha_d{\gamma_d}^{-1}={}^{\sigma_{5}}\alpha_d$. Furthermore, one can show that $\gamma_d\beta_d{\gamma_d}^{-1}=\beta_d={}^{\sigma_{5}}\beta_d$.

Thus, $\gamma_d$ and $\gamma_J$ are elements of the twisted Lefschetz group. By Proposition \ref{prop:AST=TL} and Theorem \ref{theorem:nondegeneracyx^n-x}, it suffices to show that the subgroup generated by $\gamma_d, \gamma_J$ in the quotient $\ST/\ST^0$ is isomorphic to $\Gal(K/\Q)$, where $K=\Q(\zeta_{2^{d+1}})$ is the field of definition of the endomorphisms. We follow the technique used in \cite[Theorem 4.2]{EmoryGoodson2022}. 

Recall from Lemma \ref{lemma:generatorGalois} that $\Gal(K/\Q)\simeq \langle \sigma_{-1},\sigma_5\rangle$, where the order of $\sigma_5$ in $\Gal(K/\Q)$ is $2^{d-1}$. It is clear that the order of $\gamma_J$ in the quotient group is 2. We now show that $\gamma_d^{2^{d-1}}$ is trivial in the quotient group, but $\gamma_d^b\not \in \ST^0(\Jac(C'_{{2^d+1}}))$ for any positive integer $b<2^{d-1}$.

To simplify the notation, we let $\zeta=\zeta_{2^{d+1}}$ for the remainder of the proof. Since the order of $\sigma_5$ in $\Gal(K/\Q)$ is $2^{d-1}$, the action of $\sigma_5$ on the block matrix $Z_d^j$ satisfies
\begin{align}\label{eqn:sigma_action_Zmatrix}
    (\sigma_5)^{b}(Z_d^j)&=\begin{cases}
Z_d^j & \text{if } b\equiv 0 \pmod{2^{d-1}},\\
Z_d^{jk} & \text{otherwise},
\end{cases}
\end{align}
for some $k\not\equiv 1\pmod{2^{d+1}}$.

We have seen that $\gamma_d \alpha_d \gamma_d^{-1}={}^{\sigma_5}\alpha_d$, and so conjugating $\alpha_d$ by $\gamma_d$ permutes (and sometimes conjugates) the diagonal block entries of $\alpha_d$. Since $\gamma_d \alpha_d \gamma_d^{-1}$ is also a diagonal block matrix, conjugating this by $\gamma_d$ will again just permute (and sometimes conjugate) the diagonal block entries. Hence, $\gamma_d^b\alpha_d\gamma_d^{-b}$ is a diagonal block matrix for any $b$, and we can write $\gamma_d^b\alpha_d\gamma_d^{-b} ={}^{(\sigma_5)^b}\alpha_d$.

By construction, each block entry of $\gamma_d$ and, hence, $\gamma_d^b$ is either $\pm I$ or $\pm J$. The matrix $\gamma_d^b$ has a $\pm J$ block entry if and only if there are some $j$ and $k$ for which ${}^{(\sigma_5)^b}Z_d^j=Z_d^{jk}$ with $Z_d^j\not=Z_d^{jk}$. By Equation \eqref{eqn:sigma_action_Zmatrix}, this is possible if and only if $b$ is not a multiple of $2^{d-1}$. Thus, the order of $\gamma_d$ in the quotient group is $2^{d-1}$.  Note that no power of $\gamma_d$ equals $\gamma_J$ in the quotient group (because of their differing block structures) and $\gamma_J$ and $\gamma_d$ commute.  Thus, the subgroup generated by $\gamma_J, \gamma_d$ in the quotient $\ST/\ST^0$ is isomorphic to $\Gal(K/\Q)\simeq \langle \sigma_{-1},\sigma_5\rangle$.
\end{proof}

\subsection{Sato-Tate groups for curves of the form $y^2=x^{2^m}-1$}

Let $C_{2^m}$ denote the curve $y^2=x^{2^m}-1$ with $m>2$. The genus of $C_{2^m}$ is always odd: $g=2^{m-1}-1$. The main result of this section is Theorem \ref{thm:STx^2m-1} which gives an explicit description of the Sato-Tate group of $\Jac(C_{2^m})$. This is a particularly nice result since we express the generators of  $\ST(\Jac(C_{2^m}))$ in terms of the Sato-Tate groups of the factors of $\Jac(C_{2^m})$.

\subsubsection{Preliminaries}\label{sec:Preliminariesx^2m-1}
Throughout this subsection, let $\zeta_{2^{m}}$ be a primitive $2^{m}$th root of unity. We define $\alpha$ and $\beta$ to be the curve automorphisms
\begin{align*}
    \alpha(x,y)&=(\zeta_{2^{m}} x,y) &
    \beta(x,y)&=\left(\frac{1}{x},\frac{i y}{x^{g+1}}\right).
\end{align*}
It follows from \cite[Table 1]{Muller2022} that these automorphisms generate the automorphism group of $C_{2^m}$. The endomorphism field of $\Jac(C_{2^m})$ is $K=\Q(\zeta_{2^{m}})$ \cite[Theorem 3.5.7]{GalleseGoodsonLombardo}.

In Theorem \ref{thm:nondegeneracyx^2m-1} we proved that the Jacobian variety $\Jac(C_{2^{m}})$ is nondegenerate. This allows us to conclude that 
$\ST(\Jac(C_{2^{m}}))/\ST^0(\Jac(C_{2^{m}}))\simeq \Gal(K/\Q)$. Applying Lemma \ref{lemma:generatorGalois} yields $\Gal(K/\Q)=\langle \sigma_{-1}, \sigma_5\rangle$.

The final preliminary detail we need before computing generators of the Sato-Tate group is a description of certain endomorphisms of $\Jac(C_{2^{m}})$. Recall the notation $\omega_{d,j}$ from Section \ref{sec:MapToLowerDim}: for $1\leq d\leq m-1$ and $0\leq j\leq 2^{d-1}-1$
\begin{align}\label{eq:omegadj}
    \omega_{d,j}=x^{n_{d,j}}\frac{dx}{y},
\end{align}
where $n_{d,j}=2^{m-d-1}(2j+1)-1$. The complex uniformization of $\Jac(C_{2^m})$ can be identified with $H^0((C_{2^m})_\C,\Omega^1_\C)=\C\left\langle\{\omega_{d,j}\}\right\rangle$. We compute pullbacks of the regular 1-forms $\omega_{d,j}$ with respect to the curve automorphisms $\alpha$ and $\beta$ and obtain the following.

\begin{lemma}\label{lemma:pullbacksx^2m-1}
For  $1\leq d\leq m-1$ and $0\leq j\leq 2^{d-1}-1$,    
\begin{align}
    \alpha^*(\omega_{d,j})&=\zeta_{2^{m}}^{n_{d,j}+1}\omega_{d,j}\label{eqn:pullbackalpha}\\
    \beta^*(\omega_{d,j})&=i\omega_{d,j'},\label{eqn:pullbackbeta}
\end{align}
where $j'=2^{d-1}-1-j$.
\end{lemma}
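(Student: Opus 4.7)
The plan is to carry out direct pullback computations using the explicit formulas for $\alpha$, $\beta$, and $\omega_{d,j} = x^{d_j}\,dx/y$ with $d_j = 2^{m-d-1}(2j+1)-1$. No structural machinery is needed; each identity will reduce to a one-line substitution followed by index bookkeeping, with the main subtlety being the verification that the exponents of $x$ land correctly on $\omega_{d,j'}$ for the claimed $j' = 2^{d-1}-1-j$.

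For $\alpha$, I would simply substitute $x \mapsto \zeta_{2^m} x$ into $\omega_{d,j}$, obtaining a factor of $\zeta_{2^m}^{d_j}$ from $x^{d_j}$ and a further factor of $\zeta_{2^m}$ from $d(\zeta_{2^m} x) = \zeta_{2^m}\,dx$, while $y$ is fixed. This yields $\alpha^*(\omega_{d,j}) = \zeta_{2^m}^{d_j+1}\omega_{d,j}$ as claimed. For $\beta$, the substitutions $x \mapsto c^{1/(g+1)}/x$ and $y \mapsto ic^{1/2}y/x^{g+1}$ produce three contributions: $c^{d_j/(g+1)}x^{-d_j}$ from $x^{d_j}$, then $-c^{1/(g+1)}x^{-2}\,dx$ from the differential, and finally $x^{g+1}/(ic^{1/2}y)$ from the reciprocal of the new $y$. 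Using $-1/i = i$ and combining, one arrives at $\beta^*(\omega_{d,j}) = i\,c^{(d_j+1)/(g+1) - 1/2}\,x^{g-1-d_j}\,dx/y$.

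It then remains to match these exponents with the statement. Using $g+1 = 2^{m-1}$ and $d_j+1 = 2^{m-d-1}(2j+1)$, the $c$-exponent simplifies to $(2j+1)/2^d - 1/2 = (2j+1 - 2^{d-1})/2^d$, as required. The critical identity is that $g - 1 - d_j = d_{j'}$ with $j' = 2^{d-1}-1-j$; expanding gives $d_{j'} = 2^{m-d-1}(2^d - 1 - 2j) - 1 = 2^{m-1} - 2^{m-d-1}(2j+1) - 1 = g - 1 - d_j$. This confirms that $\beta^*$ maps $\omega_{d,j}$ to a scalar multiple of $\omega_{d,j'}$, completing the calculation. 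The main obstacle is not mathematical depth but careful tracking of the sign from $d(1/x) = -dx/x^2$, the identification $-1/i = i$, and the involution $j \mapsto 2^{d-1}-1-j$ on the index set; getting any of these wrong would corrupt the downstream computation of the endomorphism matrices used in the proof of Theorem \ref{thm:STx^2m-c}.
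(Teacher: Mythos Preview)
Your proposal is correct and follows essentially the same approach as the paper: direct substitution of the automorphisms $\alpha$ and $\beta$ into $\omega_{d,j}$, followed by the same exponent simplifications for $c$ and the same verification that $g-1-d_j = d_{j'}$ with $j' = 2^{d-1}-1-j$. If anything, your write-up is slightly more explicit about the sign coming from $d(1/x)=-dx/x^2$ and the identity $-1/i=i$, which the paper leaves implicit in its one-line computation of $\beta^*(\omega_{d,j})$.
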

\begin{proof}
    We easily show the validity of Equation \eqref{eqn:pullbackalpha}:
    $$\alpha^*(\omega_{d,j})=\alpha^*({x^{dj}dx}/{y})=\frac{\zeta_{2^{m}}^{n_{d,j}+1}x^{n_{d,j}}dx}{y}=\zeta_{2^{m}}^{n_{d,j}+1}\omega_{d,j}.$$

    To prove that Equation \eqref{eqn:pullbackbeta} holds, first note that 
     $$\beta^*(\omega_{d,j})=\beta^*({x^{n_{d,j}}dx}/{y})=\frac{(1/x)^{n_{d,j}}d(1/x)}{ iy/x^{g+1}}=ix^{g-1-n_{d,j}} dx/y.$$

Furthermore,
$$ g-1-n_{d,j} = 2^{m-1}- 2 - (2^{m-d-1}(2j+1)-1)=2^{m-d-1}(2(2^{d-1}-j-1)+1)-1,$$
which equals $d_{j'}$ when  $j'=2^{d-1}-j-1$. 
Hence, $\beta^*(\omega_{d,j})=ix^{d_{j'}} dx/y,$ which proves the desired result.
\end{proof}

We take the symplectic basis of $H_1(\Jac(C_{2^m})_\C,\C)$ corresponding to the basis $\{\omega_{d,j}\}_{d,j}$ of regular 1-forms  (this basis is symplectic with respect to the skew-symmetric matrix $\diag(J,J,\ldots, J)$) to give explicit descriptions of the associated endomorphisms of $\Jac(C_{2^m})$. The power of $\zeta_{2^m}$ appearing in the pullback of $\omega_{d,j}$ under $\alpha$ satisfies $n_{d,j}+1=2^{m-(d+1)}(2j+1)$. This yields $\zeta_{2^m}^{n_{d,j}+1}=((\zeta_{2^m})^{2^{m-(d+1)}})^{2j+1}$, which equals $(\zeta_{2^{d+1}})^{2j+1}$. These are the powers of $\zeta_{2^{d+1}}$ that appear in the endomorphism $\alpha_d$ in Equation \eqref{eqn:alphad}. Thus, the endomorphism corresponding to $\alpha$ is a block diagonal matrix whose block entries are given by the diagonal matrices $\alpha_d$ for $1\leq d\leq m-1$, written in increasing order on $d$:
\begin{align}\label{eqn:alphaendomorphism}
    \alpha&=\diag(\alpha_1, \alpha_2, \ldots, \alpha_{m-1}).
\end{align}

We compute the endomorphism $\beta$ in a similar manner. We find that the endomorphism $\beta$ is a block diagonal matrix whose block entries are nearly the antidiagonal matrices $\beta_d$. Letting $Z$ be the $(2g\times2g)$-matrix $Z=\diag(i,-i,i,-i,\ldots,i,-i)$, the endomorphism $\beta$ can be written as
\begin{align}\label{eqn:betaendomorphism}
    \beta&=Z\cdot\diag(\beta_1, \beta_2, \ldots, \beta_{m-1}).
\end{align}

\subsubsection{Sato-Tate group results}

\begin{proposition}\label{prop:ST0x^2m-1}
For $m>2$,
$$\ST^0(\Jac(C_{2^m}))\simeq \U(1)\times  \left(\U(1)^{2^{m-2}-1}\right)_2.$$  
\end{proposition}

\begin{proof}
    Using the decomposition of the Jacobians in Equation \eqref{eq:JacobianDecomposition} we have 
    $$\Jac(y^2=x^{2^m}-1) \sim (y^2=x^3-x) \times \displaystyle\prod_{d=2}^{m-1}\Jac(y^2=x^{2^{d}+1}-x).$$
    It is well known that $\ST^0(y^2=x^3-x)\simeq \U(1).$   For the case where $d>1$, by Proposition \ref{prop:ST0x^n-x} we have that the identity component of the Jacobians of each of the factors is 
    \[\ST^0(\Jac(C'_{{2^d+1}}))\simeq\left(\U(1)^{2^{d-2}}\right)_2 .\] Since the Jacobian of $C_{2_m}$ is stably nondegenerate (Theorem \ref{thm:nondegeneracyx^2m-1})$$\ST^0(\Jac(C_{2^m}))\simeq \ST^0(y^2=x^3-x) \times \displaystyle\prod_{d=2}^m \ST^0(\Jac(C'_{{2^d+1}}))$$ and the conclusion follows.
\end{proof}

\theoremST

\begin{proof}
   The proof follows from Theorem \ref{thm:STx^n-x}, the action of the Galois generators $\sigma_{-1}$ and $\sigma_5$, and Equations \eqref{eqn:alphaendomorphism} and \eqref{eqn:betaendomorphism}.
\end{proof}

Examples \ref{example:x16} and \ref{example:x32} in the Introduction make explicit the generators that we obtain from this construction for the curves $y^2=x^{16}-1$ and $y^2=x^{32}-1$.


\section{Moment Statistics}\label{sec:moments}

In this section, we describe the distributions of the coefficients of the characteristic polynomial of  random conjugacy classes in the Sato-Tate groups in Theorems \ref{thm:STx^n-x} and \ref{thm:STx^2m-1}. These moment statistics can be used to verify the equidistribution statement of the generalized Sato-Tate conjecture by comparing them to moment statistics obtained for the coefficients of the normalized $L$-polynomial. The numerical moment statistics are an approximation since one can only ever compute them up to some bound. 

\subsection{Computation Techniques}\label{sec:MomentStatsBackground}
The techniques described in this section are adapted from \cite{EmoryGoodson2022,FiteKedlayaSutherlandMotives2016, GoodsonCatalan}. We define the $n$th moment (centered at 0) of a probability density function to be the expected value of the $n$th power of the values, i.e. $M_n[X]=E[X^n]$.  The moments of a function or distribution can be used to understand the shape of the graph or data. 

We first define the Haar measure on the groups that we obtain for the identity components of the Sato-Tate groups. From Propositions \ref{prop:ST0x^n-x} and \ref{prop:ST0x^2m-1} we see that the possible groups are products of the groups $\U(1)$ and $\U(1)_2.$

We start with the unitary group $\U(1)$ and consider the trace map $\tr$ on a random element $U\in \U(1)$ defined by $z:=\tr(U)=u+\overline{u}=2\cos(\theta)$, where $u=e^{i\theta}$. From here we see that $dz=2\sin(\theta)d\theta$ and 
$$\mu_{\U(1)}= \frac1{2\pi} \frac{dz}{\sqrt{4-z^2}}=\frac1{2\pi} d\theta$$
gives a uniform measure of $\U(1)$ on the eigenangle $\theta\in[-\pi,\pi]$ (see \cite[Section 2]{SutherlandNotes}). We can deduce the following measures
\begin{equation*}\label{eqn:muU1}
    \mu_{\U(1)^n}= \prod_{i=1}^n \frac1{2\pi} \frac{dz_i}{\sqrt{4-z_i^2}}= \prod_{i=1}^n\frac{1}{2\pi}d\theta_i\;\;
\text{ and }\;\;\mu_{(\U(1)_2)^{n}} = \prod_{i=1}^{n} \frac1{2\pi} \frac{dz_i}{\sqrt{4-z_i^2}} = \prod_{i=1}^{n}\frac{1}{2\pi}d\theta_i.
\end{equation*}

Note that though the measure $\mu_{(\U(1)_2)^{n}}$ is expressed the same as the measure $\mu_{\U(1)^{n}}$, we will get a different distribution since in the former case each eigenangle $\theta_i$ occurs with multiplicity 2. These can be generalized even further to give measures for the groups appearing as identity components  in Propositions \ref{prop:ST0x^n-x} and \ref{prop:ST0x^2m-1}.

We now define the moment sequence $M[\mu]$, where $\mu$ is a positive measure on some interval $I=[-d,d]$. The $n^{th}$ moment $M_n[\mu]$  is the expected value of $\phi_n$ with respect to $\mu$, where $\phi_n$ is the function $z\mapsto z^n$. It is therefore given by 
$M_n[\mu] = \int_I z^n\mu(z).$
Computing this for the measures $\mu_{\U(1)}$ and $\mu_{\U(1)_2}$  yields $M_n[\mu_{\U(1)}]  = \binom{n}{n/2}$ and $M_n[\mu_{\U(1)_2}] =2^n\binom{n}{n/2}$, where $\binom{n}{n/2}=0$ if $n$ is odd. We can take binomial convolutions of these moment identities to compute moments for the groups appearing as identity components in Propositions \ref{prop:ST0x^n-x} and \ref{prop:ST0x^2m-1}.  

More generally, let $B$ be a random element (with respect to the normalized Haar measure) of a compact subgroup of $\USp(2g)$ and let $a_i:=a_i(B)$ denote the $i^{th}$ coefficient of the characteristic polynomial of $B$. The $n^{th}$ moment $M_n[a_i]$ is the expected value of $a_i^n$ and we can compute this by integrating against the Haar measure. We obtain moment statistics for the full Sato-Tate group of an abelian variety by taking the average of the moments for $U\cdot B$ for each element $B$ in the component group, where $U$ is a random element of the identity component. We work out an explicit example in Section \ref{sec:momentsgenus4} to demonstrate how these techniques work in practice.

The moment statistics coming from the Sato-Tate group can then be compared to numerical moment statistics coming from the coefficients of the normalized $L$-polynomial of the abelian variety. The numerical moments in Sections \ref{sec:momentsgenus4} and \ref{sec:momentshighergenus} were computed for primes up to $2^{22}$ using an algorithm described in \cite{HarveySuth2014} and \cite{HarveySuth2016}. The bound was chosen to balance two competing interests: accuracy and computation time. While it would be ideal to be able to use a larger bound for the prime $p$, we found that this bound was sufficient for comparison to the moments coming from the Sato-Tate group. On the other hand, this limitation provides extra motivation to compute explicit generators for the Sato-Tate group in order to give an accurate description of the limiting distributions of the coefficients of normalized $L$-polynomials.

\subsection{Genus 4 Example}\label{sec:momentsgenus4}

We begin with the genus 4 curve $C'_9:y^2=x^9-x$. We use Theorem \ref{thm:STx^n-x} 
 to determine that the component group of Sato-Tate group is generated by the block matrices
 $$\begin{pmatrix}0&0&I&0\\J&0&0&0\\0&0&0&-J\\0&I&0&0 \end{pmatrix}$$
  and $\gamma_J=\diag(J,J,J,J)$.
The identity component is the connected group $\left(\U(1)\times \U(1)\right)_2$. Let $U=\diag(u_0,\overline u_0, u_1, \overline u_1,u_0,\overline u_0, u_1, \overline u_1)$ where $u_j=e^{i\theta_j}$, be an element in the identity component. We compute the characteristic polynomial of each matrix $U\gamma^k\gamma_J^m$, where $0\leq k\leq 7$ and $0\leq m\leq 1$. We find that $a_1$-coefficient of the characteristic polynomial $U\gamma^k\gamma_J^m$ equals 0 unless $k=m=0$. The $n^{th}$ moment $M_n[a_1(U)]$ is given by
{$$\frac{2^{n-2}}{\pi^2}\int_0^{2\pi}\int_0^{2\pi} \left( 
\cos\left(\theta_0\right)+\cos\left(\theta_0\right)+
\cos\left(\theta_1\right)+\cos\left(\theta_1\right)
\right)^n d\theta_0\,d\theta_1.$$}
We then compute the moment statistics $M_n[\mu_1]$ of the full Sato-Tate group by averaging over the size of the group.

Table \ref{table:x9x} below gives (rounded) numerical $a_1$-moment statistics and the corresponding exact $\mu_1$-moment statistics coming from the Sato-Tate group. The odd moments $M_n[\mu_1] = 0$ for all odd $n$, so we omit those values from the table. The errors in the even moments are consistent with those for the odd moments and would be improved by choosing a larger bound for $p$. 

\begin{table}[h]
\begin{tabular}{|c|p{2cm}|p{2cm}|p{2cm}|p{2cm}|p{2cm}|}
\hline
\multicolumn{5}{|c|}{$a_1$-moments}\\
\hline
 &$M_2$ & $M_4$ & $M_6$ & $M_8$ \\ 
\hline
$a_1$&  1.989  & 71.299  & 3154.55 & 153942\\
\hline
$\mu_1$ & 2  &  72  &  3200 &  156800 \\
\hline
\end{tabular}
\caption{Table of $a_1$- and $\mu_1$-moments ($p<2^{22}$)  for $y^2=x^9-x$.}\label{table:x9x}
\end{table}

\subsection{Higher Genus Examples}\label{sec:momentshighergenus}

In this section we present moment statistics for some higher genus curves. Computing the numerical moments becomes more difficult as the genus grows and limits the bound we can use on the primes. In order to compute numerical statistics using primes up to $2^{22}$ for the curves $y^2=x^{16}-1$ and $y^2=x^{32}-1$, we relied on the factorizations of their Jacobians and $L$-polynomials. For example, for the genus 7 curve $y^2=x^{16}-1$, we use Equation \eqref{eq:JacobianDecomposition} to write
$$\Jac(y^2=x^{16}-1)\sim \Jac(y^2=x^3-x)\times \Jac(y^2=x^5-x)\times \Jac(y^2=x^9-x).$$
This tells us that the $L$-polynomial of $\Jac(y^2=x^{16}-1)$ (for good primes $p$) factors into the product of $L$-polynomials of the factor curves. The $a_1$-trace  of the normalized $L$-polynomial of $\Jac(y^2=x^{16}-1)$ is therefore the sum of the $a_1$-traces of the factors, which we were able to compute. The numerical and Sato-Tate moments are given in Table \ref{table:x16}.

\begin{table}[h]
\begin{tabular}{|c|p{2cm}|p{2cm}|p{2cm}|p{2cm}|p{2cm}|}
\hline
 &$M_2$ & $M_4$ & $M_6$ & $M_8$ \\ 
\hline
$a_1$& 4.9749 & 238.008 & 20277.7 & 2203730\\
\hline
$\mu_1$ & 5 & 243 &  21170 & 2358755\\
\hline
\end{tabular}
\caption{Table of $a_1$- and $\mu_1$-moments ($p<2^{22}$) for $y^2=x^{16}-1$.}\label{table:x16}
\end{table}

We were able to directly compute moment statistics for the genus 8 curve $y^2=x^{17}-x$ for primes up to $2^{22}$. The numerical and Sato-Tate moments for the $a_1$-trace are given in Table \ref{table:x17x}.

\begin{table}[h]
\begin{tabular}{|c|p{2cm}|p{2cm}|p{2cm}|p{2cm}|p{2cm}|}
\hline
 &$M_2$ & $M_4$ & $M_6$ & $M_8$ \\ 
\hline
$a_1$ & 1.98265 & 164.813 & 19727.9 & 2861530\\
\hline
$\mu_1$ & 2 & 168 & 20480 & 3041920\\
\hline
\end{tabular}
\caption{Table of $a_1$- and $\mu_1$-moments ($p<2^{22}$) for $y^2=x^{17}-x$.}\label{table:x17x}
\end{table}

For the genus 15 curve $y^2=x^{32}-1$, we once again made use of the factorization of the Jacobian and its $L$-polynomial. From Equation \eqref{eq:JacobianDecomposition} we have
$$\Jac(y^2=x^{32}-1)\sim \Jac(y^2=x^3-x)\times \Jac(y^2=x^5-x)\times \Jac(y^2=x^9-x)\times \Jac(y^2=x^{17}-x)$$
and so we add the $a_1$-traces of the factors to obtain the $a_1$-trace for $y^2=x^{32}-1$. The numerical and Sato-Tate moments are given in Table \ref{table:x32}.

\begin{table}[h]
\begin{tabular}{|c|p{2cm}|p{2cm}|p{2cm}|p{2cm}|p{2cm}|}
\hline
 &$M_2$ & $M_4$ & $M_6$ & $M_8$ \\ 
\hline
$a_1$ &6.94177 & 698.749 & 150219 & 46647200\\
\hline
$\mu_1$ & 7 &  723 & 159190 &  49909475\\
\hline
\end{tabular}
\caption{Table of $a_1$- and $\mu_1$-moments ($p<2^{22}$) for $y^2=x^{32}-1$.}\label{table:x32}
\end{table}
\section {Declarations}
\begin{itemize}
\item Conflict of interest/Competing interest: The authors hereby declare that the disclosed information is correct and that no other situation of real, potential or apparent conflict of interest is known to us. In particular, there are not financial or non-financial interests that are directly or indirectly related to the work submitted for publication.

\item Data Availability Statement: We do not analyze or generate any datasets, because our work proceeds within a theoretical and mathematical approach. Code for calculations in Sage can be found at  https://github.com/heidigoodson/Nondegeneracy-and-Sato-Tate-Distributions-of-Two-Families-of-Jacobian-Varieties. 
\end{itemize}

\bibliographystyle{abbrv}
\bibliography{SatoTatebib}

\end{document}